\numberwithin{equation}{section}
\newtheorem{example}{Example}[section]
\newtheorem{ex}[example]{Example}
\newtheorem{dhef}[example]{Definition}
\newtheorem{lemma}[example]{Lemma}
\newtheorem{remark}[example]{Remark}
\newtheorem{prop}[example]{Proposition}
\newtheorem{theo}[example]{Theorem}
\newtheorem{cor}[example]{Corollary}
\def\elle#1{L^{#1}}
\def\elleom#1{L^{#1}(\Omega)}
\def\SobomX#1{W^{1,#1}_0(\Omega,X)}
\def\Mar#1{\mathbf{M}^{#1}(\Omega)}
\def\io{\int_\Omega}
\def\norma#1#2{ \|#1 \|_{#2}}
\def\N{\mathbb{N}}
\def\R{\mathbb{R}}
\newcommand{\abs}[1]{\left|#1\right|}
\DeclareRobustCommand{\rchi}{{\mathpalette\irchi\relax}}
\newcommand{\irchi}[2]{\raisebox{\depth}{$#1\chi$}} % inner command, used by \rchi
\title[Linear $X$-elliptic equations with measurable coefficients]{Existence, uniqueness, regularity and stability of solutions to linear $X$-elliptic equations with measurable coefficients}
\author{Marco Picerni
\orcidlink{0009-0004-4364-4831}}\email{mpicerni@sissa.it}
\address{SISSA, via Bonomea 265, 34136, Trieste, Italy}
\begin{document}

\begin{abstract}
We prove an existence and uniqueness result for solutions to linear $X$-elliptic equations with $\elle1$ data and zero Dirichlet boundary conditions. Such solutions depend continuously on the datum. Moreover, we show that an improvement in the summability of the data yields a corresponding improvement in the summability of the solutions, in a manner analogous to the one that occurs in the case of uniformly elliptic equations.\\
\textbf{MSC:} 35H20, 35J70, 35B35, 35B45, 35B65, 35R05\\
\textbf{Keywords:} Subelliptic equations, H\"ormander vector fields, Carnot groups, Degenerate elliptic equations, Heisenberg group, Regularity of solutions
\end{abstract}

\maketitle

\section{Introduction}
$X$-elliptic operators are a specific class of second-order differential operators in divergence form associated to a suitable family of Lipschitz-continuous vector fields $X=\{X_1,\ldots,X_m\}$. This class of operators, which are typically degenerate elliptic, was first introduced in \cite{KL-Xelliptic_intro} as a generalization of uniformly elliptic operators in divergence form. since then, multiple results for $X$-elliptic operators have been developed (see, for instance, \cite{FSSC-ApproxImbeddingWeighted}, \cite{KL-Liouville}, \cite{Mazzoni-Green}, and \cite{Pinamonti_multiplicity}).

While much of the research has focused on operators with continuous coefficients (especially on the study of the Sublaplacian on Carnot Groups, see \cite{SubLapl_Hormander}, \cite{Bramanti_book}), this paper shifts attention to $X$-elliptic operators with measurable coefficients. 
In particular, we are interested in the properties of solutions to partial differential equations with data belonging to $\elle p$ for some $p\geq1$. 

In the uniformly elliptic case, results on existence, uniqueness and regularity of solutions to such PDEs are well established and closely related to the Poincar\'e and the Sobolev inequalities. We extend these classical results to the $X$-elliptic setting using the results on Poincar\'e and Sobolev inequalities in metric spaces \cite{HP-SobPoinc,Sobmetric-book} alongside classical technique \cite{BoccardoCroce,BoccardoGallouet,StampacchiaElliptic}.

We start by introducing the main elements and notations of our problem. We will then provide a detailed outline of the paper. Throughout this paper, $\Omega$ will be an open subset of $\R^N$ with finite Lebesgue measure.

Let $X=\left\{X_1, \ldots, X_m\right\}$ be a family of locally Lipschitz-continuous vector fields in $\mathbb{R}^N$ (with $N\geq2$) with coordinate representation $X_j=\left(c_{j 1}, \ldots, c_{j N}\right), j=1, \ldots, m$. That is,
$$
X_j=\sum_{k=1}^N c_{j k} \partial_{x_k},
$$
for some Lipschitz-continuous functions $c_{j k}$.
Moreover, for any $u\in C^1_c(\Omega)$, we will use \say{$X u$} to denote the $X$-gradient of $u$, which is
$$
X u=\left(X_1 u, \ldots, X_m u\right) = C(x)\nabla u,
$$
where $C(x)$ is the $m\times N$ matrix given by $C(x)=\{c_{j k}(x)\}$.

\begin{remark}
    For any function $u\in\elleom1$, the $X$-gradient is defined in the following distributional sense:
    $$\langle X_iu, \varphi\rangle = - \io u X_i\varphi\quad\forall \varphi\in C^1_c(\Omega)\quad i=1\ldots m.$$
    We also let $X^*$ denote the (formal) adjoint of the differential operator $X$.
\end{remark}

We are interested in existence and regularity of solutions to
\begin{equation}\label{Problemabase}
    \begin{dcases}
        X^*\left(A(x) X u\right) = f(x) \quad & \Omega\\
        u=0 \quad & \partial\Omega
    \end{dcases}
\end{equation}
where $\Omega$ is an open and bounded subset of $\R^N$, $f\in\elleom1$ and $A$ is a matrix-valued measurable function for which there exists $0<\alpha<\beta$ such that
\begin{equation}\label{HPsuA}
    \alpha\abs\eta^2\leq A(x)\eta\cdot\eta\leq\beta\abs\eta^2\quad \forall\eta\in\R^m.
\end{equation}

\begin{remark}
    No assumption on the regularity of the matrix $A(x)$, which will simply be measurable, has been made.
\end{remark}

By the Gauss-Green formula, we have, for any $u,v\in C^1_c(\Omega)$
     \begin{equation}         
     \begin{split}
         \langle X^*\left(A(x) X u\right), v \rangle &= A(x)Xu\cdot Xv
         \\&=
         A(x)C(x)\nabla u \cdot C(x)\nabla v= C(x)^TA(x)C(x)\nabla u \cdot \nabla v.
     \end{split}
     \end{equation}
    It follows that, given the $N\times N$ matrix $\tilde A(x)\colon=C(x)^TA(x)C(x)$ (which may not be uniformly elliptic), the operator $L=X^*\left(A(x)X\cdot\right)$ can be written as
    \begin{equation}\label{L_e_divergenza}
        Lu=-\operatorname{div}(\tilde A(x)\nabla u),
    \end{equation}
this suggests the following definition.
\begin{dhef}
    Given $f\in\elleom1$, we say that $u\in\SobomX1$ is a distributional solution to \eqref{Problemabase} if
$$\io A(x)Xu\cdot X\varphi=\io f(x)\varphi\quad\forall\varphi\in C^1_c(\Omega).$$
\end{dhef}

\begin{remark}
    Since the operator $L$ in \eqref{L_e_divergenza} is an elliptic operator in divergence form, it is natural to investigate whether the $L^p$ regularity results which hold in the uniformly elliptic case \cite{BoccardoGallouet,StampacchiaElliptic} to this class of possibly degenerate operators in divergence form. Note that, in general, $m$ may be strictly smaller than $N$, which implies that $\tilde A$ may not be a uniformly elliptic matrix. 
\end{remark}

Note that, under assumption \eqref{HPsuA}, we have that
\begin{equation}\label{X-elliptic-pointwise}
    \alpha \sum_{j=1}^m\abs{c_j(x)\cdot\xi}^2 \leq 
    \sum_{i, j=1}^N \tilde A_{i j}(x) \xi_i \xi_j
    \leq \beta \sum_{j=1}^m\abs{c_j(x)\cdot\xi}^2 \quad \forall x, \xi \in \mathbb{R}^N,
\end{equation}
This leads to the following coercivity property, which highlights the similarity with the elliptic case
\begin{equation}\label{X-elliptic}
    \alpha\io \abs{Xu}^2\leq \io \tilde A(x)\nabla u\cdot\nabla u\leq \beta\io \abs{Xu}^2\quad \forall u\in C^1_c(\Omega).
\end{equation}

\begin{remark}
    This property of the operator $L$ is called, in literature, $X$-ellipticity. We will equivalently use this term to refer to the inequality
    \begin{equation}
        \alpha\io \abs{Xu}^2\leq \io A(x)X u\cdot X u,
    \end{equation}
    which is crucial for our purposes.
\end{remark}

\section*{Outline}

In Section \ref{Sect_Geometric assumptions} we recall the standard assumptions on the vector fields $X$ and the properties of the functional spaces involved. A crucial role will be played by the Sobolev inequality, which is the main ingredient to establish $L^p$ regularity results for elliptic equations.

In Section \ref{Sect_existence}, we will prove the existence of a unique solution (found by approximation) to \eqref{Problemabase} with $\elle1$ data and in Section \ref{Sect_LpRegularity} we will prove that, as the summability of the datum $f$ increases, the summability of both the solution and its gradient will improve accordingly.

Sections \ref{Sect_DualitySol} and \ref{Sect_Measuredata} are devoted to the notion of duality solutions, a definition of solution (equivalent to the one of solution found by approximation) which exploits the linearity of the problem, and to the study of $X$-elliptic equations with measure data.

In the last sections, we state stability results for the solutions (which easily descend from the previous theorems) and compare the regularity estimates obtained with the classical elliptic estimates. To this end, we will apply our results to the study of linear equations involving the Heisenberg Laplacian.

\section{Assumptions on the vector fields and Functional spaces}\label{Sect_Geometric assumptions}

We start by recalling the notion of control distance (or Carnot-Carath\'eodory metric) associated to the family of vector fields $X$.

An absolutely continuous path $\gamma:[0, T] \rightarrow \Omega$ is said $X$-subunit if $\dot{\gamma}(t)=\sum_{j=1}^m \theta_j(t) X_j(\gamma(t))$, with $\sum_{j=1}^m \theta_j^2(t) \leq 1$, for almost every $t \in[0, T]$. Assuming that $\Omega$ is $X$-connected, i.e. for every $x, y \in \Omega$ there exists at least one $X$-subunit path connecting $x$ and $y$, we define 
$$d_X(x, y):=\inf \left\{T>0 \mid \exists \gamma:[0, T] \rightarrow \R^N\,  \text{$X$-subunit such that } \gamma(0)=x, \gamma(T)=y\right\}.$$
Under these connectedness assumption, $d_X$ is a metric on $\Omega$. It can be proved that $d_X(x, y) \rightarrow 0$ implies $|y-x| \rightarrow 0$, where $|\cdot|$ is the euclidean norm.

We will make the following assumptions on family of the vector fields $X$, which have been thoroughly outlined in \cite{Mazzoni-Green}, \cite{KL-Liouville} and \cite{HP-SobPoinc}.
\begin{itemize}
    \item The Carnot-Carath\'eodory metric $d_X$ is equivalent to the euclidean norm 
    $$|y-x| \rightarrow 0 \quad \Rightarrow\quad d_X(x, y) \rightarrow 0.$$ This is true, for example, if the vector fields $X_j$ are smooth and satisfy the H\"ormander condition, which means that the commutators of the vector fields $X_i$ up to some step $s\leq N$ span $\R^N$.
    \item $(\Omega,d_X,\mathcal L^N)$, where $\mathcal L^N$ denotes the Lebesgue measure, is a doubling space, which means that there exists a constant $Q>0$ such that 
    \begin{equation}\label{QDoubling}
        0<\mathcal{L}^N(B_{2r})\leq2^Q\mathcal{L}^N(B_{r}),
    \end{equation}
    where $B_r$ is the $d_X$-ball of radius $r$ . Note that we can assume, without loss of generality, that $Q>2$.
    \item there exist positive constants $C_P, \nu$ such that the following Poincar\'e inequality holds
    $$
    \fint_{B_r}\left|u-u_r\right| \mathrm{d} x \leq C_P r \fint_{B_{\nu r}}|X u| \mathrm{d} x, \quad \forall u \in C^1\left(\overline{B_{\nu r}}\right)
    $$
    for any $d_X$-ball $B_r$, with $u_r=:=\frac{1}{\left|B_r\right|} \int_{B_r} u\, \mathrm{d} x$.
\end{itemize}

\begin{ex}
    Aside from $X_i=\frac{\partial}{\partial x_i}$, which lead to the uniformly elliptic case, an example of family of vector fields satisfying these assumptions is given by
    $$
    \left\{\begin{aligned}
    X_i & =\frac{\partial}{\partial x_i}+2 y_i \frac{\partial}{\partial t},\quad i=1\ldots n \\
    Y_i & =\frac{\partial}{\partial y_i}-2 x_i \frac{\partial}{\partial t},\quad i=1\ldots n \\
    T & =\frac{\partial}{\partial t}
    \end{aligned}\right.
    $$
    on $\R^N$ with $N={2n+1}$. This structure, which is related to the Heisenberg group, will be studied more in detail in Section \ref{Sect_Heisenberg}.
\end{ex}

Since the existence of a solution to \eqref{Problemabase} is related to the coercivity of the bilinear form $$B(u,v)=\io A(x)X u \cdot X v,$$ we need a suitable class of Sobolev spaces. Here, we follow \cite{HP-SobPoinc}.
\begin{dhef}
    We define the space $W^{1,p}_0(\Omega,X)$ as the closure of $C^\infty_c(\Omega)$ under the norm
    $$\norma{u}{W^{1,p}_0(\Omega,X)}=\norma{Xu}{\elleom p}.$$
\end{dhef}
Note that, as in the uniformly elliptic case (when $X$ is the standard gradient), $W^{1,2}_0(\Omega,X)$ is a Hilbert space with the scalar product $(u,v)=\io Xu \cdot Xv$.
Moreover, $\SobomX p$ satisfies the following equivalent of the Sobolev embedding theorem (see \cite{KL-Liouville} and \cite{HP-SobPoinc}).
\begin{theo}(Sobolev Inequality). Let $1\leq p<Q$, with $Q$ given by \eqref{QDoubling}. There exists a positive constant $\mathcal{S}_p$ (only depending on $C_P$, $\nu$, $Q$ and $p$) such that
$$
\norma{u}{\elleom {p^*}}\leq \mathcal{S}_p\norma{Xu}{\elleom p} \quad \forall u \in W^{1,p}_0(\Omega,X),\quad p^*=\frac{p Q}{Q-p} .
$$
\end{theo}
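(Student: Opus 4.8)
The plan is to reduce the statement to the endpoint exponent $p=1$, namely to $\norma{u}{\elleom{1^*}}\leq\mathcal{S}_1\norma{Xu}{\elleom1}$ with $1^*=\tfrac{Q}{Q-1}$, and then to recover the range $1<p<Q$ by the classical power trick. Granting the case $p=1$, for $u\in C^\infty_c(\Omega)$ I apply it to $v=\abs u^\gamma\in C^1_c(\Omega)\subset\SobomX1$, where $\gamma=\tfrac{p(Q-1)}{Q-p}>1$ and hence $\abs{Xv}=\gamma\abs u^{\gamma-1}\abs{Xu}$; by H\"older (conjugate exponent $p'=\tfrac{p}{p-1}$),
$$\norma{v}{\elleom{1^*}}\leq\gamma\,\mathcal{S}_1\,\norma{\abs u^{\gamma-1}\abs{Xu}}{\elleom1}\leq\gamma\,\mathcal{S}_1\,\norma{u}{\elleom{(\gamma-1)p'}}^{\gamma-1}\,\norma{Xu}{\elleom p}.$$
This $\gamma$ is exactly the one making $\gamma\,1^*=(\gamma-1)p'=p^*$, so the display becomes $\norma{u}{\elleom{p^*}}^{\gamma}\leq\gamma\,\mathcal{S}_1\norma{u}{\elleom{p^*}}^{\gamma-1}\norma{Xu}{\elleom p}$; dividing by $\norma{u}{\elleom{p^*}}^{\gamma-1}$ (finite, since $u$ is bounded with compact support) yields $\norma{u}{\elleom{p^*}}\leq\gamma\,\mathcal{S}_1\norma{Xu}{\elleom p}$. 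Since $\SobomX p$ is by definition the closure of $C^\infty_c(\Omega)$, a density argument with Fatou's lemma extends this to all of $\SobomX p$, with $\mathcal{S}_p=\gamma\,\mathcal{S}_1$.

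For the case $p=1$ I follow the Haj\l asz--Koskela scheme, whose first step is a pointwise bound. Fix $u\in C^\infty_c(\Omega)$, extend it by zero to $\R^N$, and take a point $x$ at which Lebesgue differentiation holds. Telescoping the averages over the balls $B_j=B(x,2^{-j}r_0)$ gives $u(x)=u_{B_0}+\sum_{j\geq0}\big(u_{B_{j+1}}-u_{B_j}\big)$, and each increment is estimated, through the doubling inequality \eqref{QDoubling} and the Poincar\'e inequality, by
$$\abs{u_{B_{j+1}}-u_{B_j}}\leq\frac{\mathcal{L}^N(B_j)}{\mathcal{L}^N(B_{j+1})}\fint_{B_j}\abs{u-u_{B_j}}\leq 2^Q C_P\,2^{-j}r_0\fint_{B(x,\nu 2^{-j}r_0)}\abs{Xu},$$
while $\abs{u_{B_0}}\leq\mathcal{L}^N\!\big(B(x,r_0)\big)^{-1}\norma{u}{\elleom1}\to0$ as $r_0\to\infty$, because $X$-connectedness of $\R^N$ forces $\mathcal{L}^N(B(x,r_0))\to\infty$. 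Summing over all scales yields
$$\abs{u(x)}\leq c(C_P,\nu,Q)\int_{\R^N}\frac{d_X(x,y)}{\mathcal{L}^N\!\big(B(x,d_X(x,y))\big)}\,\abs{Xu(y)}\,\mathrm{d}y=:c\,I_1\!\big(\abs{Xu}\big)(x).$$

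The second step turns this into the inequality. First, $I_1$ maps $\elle1(\Omega)$ into the weak space $\elle{1^*,\infty}(\Omega)$: one splits the potential into the part over $d_X(x,y)<\delta$, dominated by the Hardy--Littlewood maximal function at scale $\delta$, and the part over $d_X(x,y)\geq\delta$, estimated by H\"older, the geometric series in the latter converging precisely because $\mathcal{L}^N\!\big(B(x,r)\big)\simeq r^Q$ in the range at hand; optimizing in $\delta$ gives the weak-type bound and hence, by density, $\norma{v}{\elle{1^*,\infty}(\Omega)}\leq c\norma{Xv}{\elleom1}$ for all $v\in\SobomX1$. Then weak is upgraded to strong by Maz'ya truncation: applying the above to $\abs u$ we may assume $u\geq0$, and the functions $u_k=\min\{(u-2^k)_+,2^k\}\in\SobomX1$ satisfy $Xu_k=Xu\,\mathbf{1}_{\{2^k<u<2^{k+1}\}}$ and $\{u\geq2^{k+1}\}\subset\{u_k\geq2^k\}$, so the weak bound gives $\mathcal{L}^N(\{u\geq2^{k+1}\})\leq\big(c\,2^{-k}\!\int_{\{2^k<u<2^{k+1}\}}\abs{Xu}\big)^{1^*}$; plugging this into $\norma{u}{\elleom{1^*}}^{1^*}\simeq\sum_k 2^{k1^*}\mathcal{L}^N(\{u\geq 2^k\})$ and using $1^*>1$ to bound $\sum_k a_k^{1^*}\leq\big(\sum_k a_k\big)^{1^*}$ over the disjoint sets $\{2^k<u<2^{k+1}\}$ produces $\norma{u}{\elleom{1^*}}^{1^*}\leq c\norma{Xu}{\elleom1}^{1^*}$. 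A final density argument closes the case $p=1$.

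The genuinely delicate part --- the one producing $\mathcal{S}_1$ --- is the pointwise estimate together with the weak-type mapping of $I_1$, with constants depending only on $C_P$, $\nu$ and $Q$: that is exactly where the geometric hypotheses on $X$ enter in full, the doubling exponent $Q$ having to double as the ``dimension'' controlling the volume of $d_X$-balls (the ball--box behaviour in the H\"ormander setting), and the vanishing Dirichlet trace having to be exploited, here via $X$-connectedness of $\R^N$. The power trick and the truncation are, by comparison, routine; in any case the statement is also available directly from \cite{HP-SobPoinc} and \cite{KL-Liouville}.
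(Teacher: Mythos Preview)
The paper does not prove this statement; it is quoted with a reference to \cite{HP-SobPoinc} and \cite{KL-Liouville}. Your sketch reproduces the Haj\l asz--Koskela argument of \cite{HP-SobPoinc} --- pointwise Riesz-type bound from the telescoping chain and Poincar\'e, weak $(1,1^*)$ for the potential, Maz'ya truncation to strong type, and the power trick for $p>1$ --- so in substance you are aligned with the cited source, as you yourself acknowledge at the end.

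Two points in the write-up deserve tightening. First, sending $r_0\to\infty$ and invoking Poincar\'e and doubling on arbitrarily large balls tacitly uses those hypotheses on all of $\R^N$, not merely on $(\Omega,d_X,\mathcal L^N)$ as the paper literally states; this is indeed how \cite{HP-SobPoinc} and \cite{KL-Liouville} are set up, but it should be made explicit. Second, the claim that the far-part series converges ``because $\mathcal L^N(B(x,r))\simeq r^Q$'' is not justified by doubling alone: \eqref{QDoubling} gives only the upper growth $\mathcal L^N(B_R)\leq (R/r)^Q\mathcal L^N(B_r)$, hence a lower bound $\mathcal L^N(B_r)\gtrsim r^Q$ at \emph{small} scales on a bounded set, not a two-sided estimate at large scales. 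The point that rescues the argument is that $\abs{Xu}$ is supported in the bounded set $\Omega$, so the potential is effectively truncated at a fixed top scale $r_0$; it is this truncation, together with the small-scale lower volume bound, that makes the weak-type estimate go through. These are loose ends in the exposition rather than gaps in the strategy; the full details are in \cite{HP-SobPoinc}.
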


\begin{remark}
    We use the notation $p^*$, as in the classical case, to denote the Sobolev exponent. We also define $p_*$ as the H\"older conjugate of $p^*$, namely $p_*=\frac{pQ}{Q(p-1)+p}$. 
\end{remark}

\begin{remark}\label{2starinbasso}
    $\elleom{2_*}$, with $2_*=\frac{2Q}{Q+2}$, is a subspace of $\left(W^{1,2}_0(\Omega,X)\right)^\prime$.
    Moreover, by definition of $X^*$, we know that
    $$\langle X^*(F(x)),v\rangle=\io C(x)^T F(x)\cdot\nabla v = \io F(x)\cdot Xv\quad\forall v\in C^1_c,$$
    which implies that $X^*(F)$ belongs to the dual of $\SobomX2$ if and only if $\abs{F}\in\elleom2$. 
\end{remark}

The ability to describe the elements of the dual of $\SobomX2$ allows us to give the following definition.
\begin{dhef}
    Given $f\in\elleom{\frac{2Q}{Q+2}}$ and $F\in\left(\elleom2\right)^m$, we say that $u\in\SobomX2$ is a weak solution to    
    \begin{equation}
        \begin{dcases}
            X^*\left(A(x) X u\right) = f(x)+X^*F(x) \quad & \Omega\\
            u=0 \quad & \partial\Omega
        \end{dcases}
    \end{equation}
    if
    $$\io A(x)Xu\cdot X v=\io f(x) v + \io F(x)\cdot Xv\quad\forall v\in \SobomX2.$$
\end{dhef}
\noindent Note that, by Lax Milgram's theorem, such a solution exists for any $f$ and $F$ as above.

We now recall other important properties of $\SobomX p$. For a more thorough discussion, we refer the reader to \cite{HP-SobPoinc}, \cite{KL-Liouville} and \cite{Mazzoni-Green}.

\begin{theo}
    $\SobomX p$ is a reflexive Banach space for $1<p<\infty$.
\end{theo}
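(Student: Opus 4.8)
The plan is to realize $\SobomX p$ as a closed subspace of the reflexive Banach space $\left(\elleom p\right)^m$, and then conclude with the standard facts that a closed subspace of a reflexive space is reflexive and that reflexivity is preserved under (isometric) isomorphism.

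First I would check that $\norma{X\cdot}{\elleom p}$ is genuinely a norm, not merely a seminorm, on $C^\infty_c(\Omega)$: if $u\in C^\infty_c(\Omega)$ satisfies $Xu=0$ a.e., then along any $X$-subunit path $\gamma$ one has $\frac{d}{dt}u(\gamma(t))=\nabla u(\gamma(t))\cdot\dot\gamma(t)=\sum_{j}\theta_j(t)\,(X_ju)(\gamma(t))=0$, so $u$ is constant along $\gamma$; since $\Omega$ is $X$-connected, $u$ is constant on $\Omega$, and by compact support $u\equiv 0$. Hence $\SobomX p$, being by definition the completion of the normed space $\left(C^\infty_c(\Omega),\norma{X\cdot}{\elleom p}\right)$, is a Banach space.

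Next, consider the linear map $T\colon C^\infty_c(\Omega)\to\left(\elleom p\right)^m$ given by $Tu=Xu=(X_1u,\ldots,X_mu)$, where $\left(\elleom p\right)^m$ is equipped with the norm $\norma{(g_1,\ldots,g_m)}{}=\big(\io\abs{g}^p\big)^{1/p}$, $\abs{g}=\big(\sum_{j=1}^m g_j^2\big)^{1/2}$. By the very definition of the norm on $\SobomX p$, $T$ is a linear isometry, so it extends uniquely to a linear isometry $T\colon\SobomX p\to\left(\elleom p\right)^m$. The image $T(\SobomX p)$ is the isometric image of a complete space, hence complete, hence a closed subspace of $\left(\elleom p\right)^m$.

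Finally, $\elleom p$ is reflexive for $1<p<\infty$, and a finite product of reflexive Banach spaces is reflexive, so $\left(\elleom p\right)^m$ is reflexive; since a closed subspace of a reflexive Banach space is reflexive and $\SobomX p$ is isometrically isomorphic to such a subspace, $\SobomX p$ is reflexive. I do not expect a real obstacle: the only point requiring a word of care is that $T$ stays injective on the completion, but that is automatic because $T$ is already an isometry on the dense subspace $C^\infty_c(\Omega)$. (Alternatively one can bypass reflexivity of the product by recalling that $\elleom p$ is uniformly convex for $1<p<\infty$, that uniform convexity passes to closed subspaces, and then invoking the Milman–Pettis theorem.)
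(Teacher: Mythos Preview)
The paper does not supply its own proof of this statement: it is recalled as a known fact with references to \cite{HP-SobPoinc}, \cite{KL-Liouville}, and \cite{Mazzoni-Green}. Your argument---embedding $\SobomX p$ isometrically as a closed subspace of $\left(\elleom p\right)^m$ via $u\mapsto Xu$ and invoking that closed subspaces of reflexive spaces are reflexive---is the standard one and is correct. A small remark: in your verification that $\norma{X\cdot}{\elleom p}$ is a genuine norm, you may as well extend $u\in C^\infty_c(\Omega)$ by zero to all of $\R^N$ (the extended function still satisfies $Xu=0$ everywhere) and use $X$-connectedness of $\R^N$, which follows from the assumed equivalence of $d_X$ with the Euclidean distance; this avoids any question about whether $\Omega$ itself is $X$-connected. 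Alternatively, for $p<Q$ the Sobolev inequality stated in the paper gives $\norma{u}{\elleom{p^*}}\le\mathcal S_p\norma{Xu}{\elleom p}=0$ directly.
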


\begin{theo}
    If $1\leq p<Q$ and $q<\frac{Qp}{Q-p}$, any bounded sequence $(u_n)_n\subset\SobomX p$ strongly converges, up to subsequences, in $\elleom q$.
\end{theo}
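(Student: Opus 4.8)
The plan is to carry out the classical Rellich--Kondrachov scheme in the metric--measure space $(\Omega,d_X,\mathcal L^N)$, with the Sobolev inequality playing the role of the source of equi-integrability and the Poincar\'e inequality (together with the doubling property) the role that mollification plays in the Euclidean proof. First I would record two reductions. Since $1\le p<Q$ we have $p^*=\tfrac{pQ}{Q-p}>1$, so by the Sobolev inequality $(u_n)_n$ is bounded in $\elleom{p^*}$; as $\mathcal L^N(\Omega)<\infty$, it is then bounded in every $\elleom s$ with $1\le s\le p^*$ and, by H\"older's inequality, uniformly integrable, i.e.\ $\sup_n\int_E\abs{u_n}\to0$ as $\mathcal L^N(E)\to0$. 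Extending each $u_n$ by zero produces $\widetilde{u_n}\in W^{1,p}_0(\R^N,X)$ with $X\widetilde{u_n}=\widetilde{X u_n}$ and $\norma{X\widetilde{u_n}}{\elle p(\R^N)}=\norma{Xu_n}{\elleom p}$ --- this is where one uses that $u_n$ is a limit in $\SobomX p$ of functions in $C^\infty_c(\Omega)$ --- so the problem may be posed on $\R^N$. Finally, once a subsequence $u_{n_k}\to u$ strongly in $\elleom1$ is found, the interpolation inequality $\norma{u_{n_k}-u}{\elleom q}\le\norma{u_{n_k}-u}{\elleom1}^{1-\theta}\norma{u_{n_k}-u}{\elleom{p^*}}^{\theta}$, valid for $1\le q<p^*$ with $\theta=\tfrac{1-1/q}{1-1/p^*}\in[0,1)$ (and using $u\in\elleom{p^*}$ by lower semicontinuity of the norm), together with the uniform $\elleom{p^*}$ bound, upgrades this to convergence in every $\elleom q$ with $q<\tfrac{pQ}{Q-p}$; the range $q\le1$ is immediate from finiteness of the measure. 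So it is enough to prove precompactness of $(\widetilde{u_n})_n$ in $\elleom1$.

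For the compactness itself I would introduce, for $r>0$, the averaging operator $A_r u(x)=\fint_{B_r(x)}u\,\mathrm{d}y$, the mean over the $d_X$-ball $B_r(x)$, and establish two properties. (a) \emph{Approximation}: for every $v\in W^{1,1}_0(\R^N,X)$ one has $\norma{v-A_rv}{\elleom1}\le C\,r\,\norma{Xv}{\elle1(\R^N)}$ with $C$ depending only on the structural constants. This is a standard chaining argument: cover $\Omega$ by balls $B_r(x_i)$ of bounded overlap (a maximal $r$-separated net, with overlap of the dilates $B_{2\nu r}(x_i)$ bounded by the doubling constant, uniformly in $r$), bound $\abs{v(x)-v_{B_r(x)}}$ by $\abs{v(x)-v_{B_r(x_i)}}+\abs{v_{B_r(x_i)}-v_{B_r(x)}}$ for $x\in B_r(x_i)$, and estimate both terms by the Poincar\'e inequality on $B_r(x_i)$, resp.\ on a fixed dilate of $B_r(x_i)$ containing $B_r(x)$. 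Applied to $v=\widetilde{u_n}$ this gives $\sup_n\norma{\widetilde{u_n}-A_r\widetilde{u_n}}{\elleom1}\le Cr\sup_n\norma{Xu_n}{\elle1(\Omega)}\to0$ as $r\to0$. (b) \emph{Compactness of the averages}: for each fixed $r>0$ the family $\{A_r\widetilde{u_n}\}_n$ is equibounded and equicontinuous on the compact set $\overline\Omega$ --- equiboundedness because $\mathcal L^N(B_r(\cdot))$ is bounded away from zero on $\overline\Omega$, and equicontinuity because $\abs{A_r\widetilde{u_n}(x)-A_r\widetilde{u_n}(x')}$ is controlled by $\sup_m\int_{B_r(x)\triangle B_r(x')}\abs{\widetilde{u_m}}$ (plus a term from the variation of $\mathcal L^N(B_r(\cdot))$), which tends to $0$ as $\abs{x-x'}\to0$ by the uniform integrability of the sequence and the continuity of $d_X$-balls --- so by Ascoli--Arzel\`a it is precompact in $C(\overline\Omega)$, hence in $\elleom1$.

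Combining (a) and (b) in the usual fashion finishes the argument: given $\varepsilon>0$ choose $r$ with $\sup_n\norma{\widetilde{u_n}-A_r\widetilde{u_n}}{\elleom1}<\varepsilon/3$; a finite $\varepsilon/3$-net for $\{A_r\widetilde{u_n}\}_n$ then provides a finite $\varepsilon$-net for $\{\widetilde{u_n}\}_n$ in $\elleom1$, so $(\widetilde{u_n})_n$ is totally bounded --- hence precompact --- in $\elleom1$. Passing to a subsequence converging strongly in $\elleom1$ to some $u$ and invoking the first-paragraph reduction gives convergence in every $\elleom q$ with $q<\tfrac{pQ}{Q-p}$; when $p>1$ one may extract a further subsequence converging weakly in the reflexive space $\SobomX p$ and identify its limit with $u$, so that also $u\in\SobomX p$.

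The main obstacle is property (b) together with the exponent bookkeeping: one cannot run the argument purely in $\elleom1$, since both the equicontinuity of the averaged sequence and the final interpolation hinge on the equi-integrability that only the $\elleom{p^*}$ bound from the Sobolev inequality ($p^*>1$ precisely because $p<Q$) delivers. The chaining estimate (a) is routine but is exactly where the Poincar\'e inequality and the doubling property get consumed, and I would expect the bookkeeping of overlap constants there to be the most tedious part. Two technical points are used tacitly and would need a line each: that the zero-extension sits inside $W^{1,1}_0(\R^N,X)$ with the expected $X$-gradient, and that $d_X$-balls depend suitably continuously on their centre (equivalently, that the ambient space near $\overline\Omega$ is $X$-connected and $d_X$-spheres are $\mathcal L^N$-negligible), so that $A_r$ is well behaved up to $\partial\Omega$.
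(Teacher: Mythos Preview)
The paper does not prove this theorem: it is stated without proof in Section~\ref{Sect_Geometric assumptions} as one of the ``important properties of $\SobomX p$'' recalled from the references \cite{HP-SobPoinc}, \cite{KL-Liouville} and \cite{Mazzoni-Green}. There is therefore no proof in the paper against which to compare your attempt.

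On its own merits, your outline is the standard metric--measure Rellich--Kondrachov scheme and is essentially what one finds in \cite{HP-SobPoinc}. The reduction to $\elleom1$-precompactness via interpolation with the Sobolev bound, the approximation estimate (a) from Poincar\'e plus doubling, and the Arzel\`a--Ascoli step (b) for the ball-averages are all correct in structure, and the technical caveats you flag at the end (zero-extension with the expected $X$-gradient, $\mathcal L^N$-negligibility of $d_X$-spheres, continuity of $x\mapsto\mathcal L^N(B_r(x))$) are exactly the points that need a line of justification. One additional hypothesis you are silently using is that $\overline\Omega$ is compact, which you need for Arzel\`a--Ascoli and for $\inf_{x\in\overline\Omega}\mathcal L^N(B_r(x))>0$; the paper's standing assumption is only that $\mathcal L^N(\Omega)<\infty$, though for the PDE it does take $\Omega$ bounded. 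If you want the statement in the generality announced you would need either to assume boundedness or to add a tightness argument showing that little $\elleom1$-mass of the $\widetilde{u_n}$ escapes to infinity (which again comes from the uniform $\elleom{p^*}$ bound). With that caveat, your argument is sound.
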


\begin{lemma}
    If $\psi$ is a Lipschitz-continuous function such that $\psi(0)=0$ and $u\in\SobomX p$, then $\psi(u)\in\SobomX p$ and $X\psi(u)=\psi'(u)Xu$.
\end{lemma}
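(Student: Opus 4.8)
The plan is to transcribe the classical argument for the uniformly elliptic case (see \cite{StampacchiaElliptic,BoccardoGallouet}) in three steps: prove the statement first when $\psi\in C^1$ with bounded derivative; then establish the auxiliary fact that the $X$-gradient vanishes on preimages of Lebesgue-null sets; and finally remove the $C^1$ hypothesis by mollification. Write $L=\mathrm{Lip}(\psi)$, so $|\psi'|\le L$ a.e.\ and, since $\psi(0)=0$, $|\psi(t)|\le L|t|$ for all $t$. \textbf{Step 1.} Assume $\psi\in C^1(\R)$ with $\|\psi'\|_\infty\le L$. If $v\in C^\infty_c(\Omega)$ then $\psi(v)\in C^1_c(\Omega)$ (its support lies in that of $v$), the pointwise chain rule for each $X_j=\sum_k c_{jk}\partial_{x_k}$ gives $X\psi(v)=\psi'(v)Xv$, and $C^1_c(\Omega)\subset\SobomX p$ by mollification (the $c_{jk}$ being locally bounded). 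For general $u\in\SobomX p$ pick $u_n\in C^\infty_c(\Omega)$ with $Xu_n\to Xu$ in $\elleom p$; by the Sobolev inequality (used with an exponent below $Q$, which is legitimate since $\Omega$ is bounded) one has $u_n\to u$ in $\elleom q$ for a suitable finite $q$, hence a.e.\ along a subsequence. Then $\psi(u_n)\to\psi(u)$ a.e., $X\psi(u_n)=\psi'(u_n)Xu_n$, and the decomposition $X\psi(u_n)-\psi'(u)Xu=\psi'(u_n)(Xu_n-Xu)+(\psi'(u_n)-\psi'(u))Xu$ shows convergence in $\elleom p$: the first term since its norm is $\le L\norma{Xu_n-Xu}{\elleom p}$, the second by dominated convergence using continuity of $\psi'$ and the bound $2L|Xu|\in\elleom p$. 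Thus $(\psi(u_n))_n$ is Cauchy in $\SobomX p$; its limit $w$ satisfies $Xw=\psi'(u)Xu$, and comparing the $\elleom q$-limit of $\psi(u_n)$ with its a.e.\ limit gives $w=\psi(u)$.

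\textbf{Step 2.} I claim that for every $u\in\SobomX p$ and every $Z\subset\R$ with $|Z|=0$ one has $Xu=0$ a.e.\ on $u^{-1}(Z)$. In the uniformly elliptic case this is immediate from the coarea formula, but an element of $\SobomX p$ need not belong to $W^{1,1}_{\mathrm{loc}}(\Omega)$, so a different argument is needed. Enlarging $Z$, write $Z=\bigcap_k V_k$ with $V_k$ open, decreasing, $|V_k|\to0$, and set $h_k(t)=\int_0^t\chi_{V_k}$: this is Lipschitz, $h_k(0)=0$, $\|h_k\|_\infty\le|V_k|$, and $h_k'=\chi_{V_k}$ a.e. For $v\in C^\infty_c(\Omega)$ the elementary chain rule for a Lipschitz function composed with a smooth one gives $h_k\circ v\in\SobomX p$ and $X(h_k\circ v)=\chi_{V_k}(v)Xv$; applying this with $v=u_n$ as in Step 1 (also with $u_n\to u$ and $Xu_n\to Xu$ a.e.) and using weak $\elleom p$ compactness of $\chi_{V_k}(u_n)Xu_n$, one obtains $h_k\circ u\in\SobomX p$ with $X(h_k\circ u)=Xu$ a.e.\ on $u^{-1}(Z)$ (there $\chi_{V_k}(u_n)\to1$ a.e., since $V_k$ is open and $Z\subset V_k$). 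Finally $h_k\circ u\to0$ uniformly as $k\to\infty$ while $\{X(h_k\circ u)\}_k$ is bounded in $\elleom p$, so $X(h_k\circ u)\rightharpoonup0$ weakly in $\elleom p$; restricting to $u^{-1}(Z)$, where $X(h_k\circ u)$ equals the $k$-independent function $\chi_{u^{-1}(Z)}Xu$, this forces $\chi_{u^{-1}(Z)}Xu=0$.

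\textbf{Step 3.} For a general Lipschitz $\psi$ with $\psi(0)=0$, mollify to get $\psi_k\in C^\infty(\R)$ with $\|\psi_k'\|_\infty\le L$, $\psi_k\to\psi$ locally uniformly and $\psi_k'\to\psi'$ a.e., and replace $\psi_k$ by $\psi_k-\psi_k(0)$ so that $\psi_k(0)=0$. By Step 1, $\psi_k(u)\in\SobomX p$ with $X\psi_k(u)=\psi_k'(u)Xu$. Letting $k\to\infty$: since $|\psi_k(u)-\psi(u)|\to0$ a.e.\ with a fixed dominating function in $\elleom q$, one has $\psi_k(u)\to\psi(u)$ in $\elleom q$, hence $X\psi_k(u)\to X\psi(u)$ in $\mathcal D'(\Omega)$; and $\psi_k'(u)Xu\to\psi'(u)Xu$ in $\elleom p$, because off $u^{-1}(Z)$ — with $Z$ the null set outside which $\psi_k'\to\psi'$ — one has a.e.\ convergence dominated by $L|Xu|\in\elleom p$, while on $u^{-1}(Z)$ Step 2 gives $Xu=0$ a.e.\ (which is also what makes the right-hand side well defined). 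Therefore $\psi(u)\in\SobomX p$ and $X\psi(u)=\psi'(u)Xu$.

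Steps 1 and 3 are essentially routine transcriptions of the uniformly elliptic proof. I expect Step 2 to be the main obstacle: because $X$ is degenerate, a function in $\SobomX p$ has no classical weak gradient in general, so the one-line coarea argument showing that the gradient vanishes on level-type sets is unavailable, and this vanishing property has to be recovered through the approximation and weak-compactness scheme above — this is where the argument genuinely departs from the classical one.
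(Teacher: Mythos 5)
Your proposal is correct, but be aware that the paper itself does not prove this lemma: it is listed among the known properties of the spaces $\SobomX p$, with the proof deferred to \cite{HP-SobPoinc}, \cite{KL-Liouville} and \cite{Mazzoni-Green}, where such chain-rule/truncation properties are part of the general theory of Sobolev spaces associated with vector fields (equivalently, of Newtonian spaces). What you supply is therefore a self-contained proof rather than an alternative to an argument in the paper, and it follows the classical Stampacchia--Boccardo--Gallou\"et scheme; the genuinely non-classical point is exactly the one you isolate in Step 2, since an element of $\SobomX p$ need not possess a Euclidean weak gradient and the one-line coarea argument is indeed unavailable. Your substitute (the primitives $h_k(t)=\int_0^t\chi_{V_k}$, the chain rule for Lipschitz composed with smooth approximants, identification of the weak limit of $\chi_{V_k}(u_n)Xu_n$ on $u^{-1}(Z)$ through a.e.\ convergence plus boundedness, and then $k\to\infty$ using $\|h_k\|_\infty\le|V_k|\to0$) is sound and yields the vanishing of $Xu$ on preimages of null sets, which is precisely what makes Step 3 work and the identity $X\psi(u)=\psi'(u)Xu$ well defined. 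Two details worth making explicit in a write-up, neither of which is a genuine gap: (i) for $p=1$ the reflexivity of $\SobomX p$ fails, so the weak compactness you invoke should come from Dunford--Pettis (equi-integrability holds because $|\chi_{V_k}(u_n)Xu_n|\le|Xu_n|$ with $Xu_n\to Xu$ strongly in $L^1(\Omega)$), combined with Mazur's lemma to keep the limit in the closed subspace $\SobomX1$; (ii) whenever you identify weak limits of $X$-gradients by testing against $\varphi\in C^1_c(\Omega)$, the integration-by-parts formula carries the extra term $\int_\Omega w\,\varphi\,\operatorname{div}(c_i)$ in addition to $-\int_\Omega w\,c_i\cdot\nabla\varphi$; since the $c_{jk}$ are locally Lipschitz, $\operatorname{div}(c_i)\in L^\infty_{\mathrm{loc}}(\Omega)$ and all your limit passages (in particular $X(h_k\circ u)\rightharpoonup0$ and the identification of $X\psi(u)$ in Step 3) go through unchanged.
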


\begin{remark}
    Under the assumptions stated in Section \ref{Sect_Geometric assumptions}, the space $\SobomX p$ can equivalently be defined via the Newtonian Sobolev spaces introduced by Shanmugalingam in \cite{Shanmu-Spaces}. For a detailed comparison of the various notions of Sobolev spaces, we refer the reader to \cite[Section 10.5]{Sobmetric-book} and the recent survey \cite{DG-gromov}.
    
    The Newtonian approach has the advantage of not requiring, in principle, neither a Poincar\'e inequality nor the doubling condition. We chose to require both properties because we are interested in the regularization properties that arise from an increase in summability in the datum of an elliptic equation, which is strongly related to the validity of a Sobolev inequality.
\end{remark}

We will also use Marcinkiewicz (see \cite{BoccardoCroce} for further details), of which we recall the definition and main properties.

\begin{dhef}
    Let $1\leq p<\infty$, we define the Marcinkiewicz space $\Mar p$ as the space of measurable functions from $\Omega$ to $\R$ for which there exists a constant $C_u\geq 0$ such that
    \begin{equation}\label{defMarcSpaces}
        \abs{\{\abs{u}\geq \lambda\}}\leq \frac{C_u}{\lambda^p}\quad\forall\,\lambda>0.
    \end{equation}
\end{dhef}

If we let $C_u$ denote the optimal constant in \eqref{defMarcSpaces}, we get $\Mar p$ is a Banach space with the norm $\norma{u}{\Mar{p}}:= C_u^\frac{1}{p}$. Furthermore, note that, since $\Omega$ is bounded, 
$$\elleom p\subset \Mar p\subset \elleom q\quad\forall\,q\in[1,p).$$
The following proposition characterizes the dual of Marcinkiewicz spaces using Lorentz spaces (denoted by $\elle {p,q}$). We refer the reader to \cite{Lorentz_ref} for further details.
\begin{prop}\label{prop:dualityMar}
    For every linear functional $\Lambda\in\left(\Mar p\right)'$ there exists a unique $v\in\elleom{p',1}$ such that the following integral representation
    $$\Lambda(u)=\io uv\quad \forall u\in\Mar p$$
    holds.
\end{prop}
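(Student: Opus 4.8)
The plan is to reconstruct the density $v$ from $\Lambda$ by a Radon--Nikodym argument, using in an essential way that $\Omega$ has finite measure. First I observe that every characteristic function $\chi_E$ of a measurable set $E\subseteq\Omega$ lies in $\elleom\infty\subseteq\Mar p$, with $\norma{\chi_E}{\Mar p}=\abs{E}^{1/p}$; hence the set function $\mu(E):=\Lambda(\chi_E)$ is well defined. It is finitely additive by linearity of $\Lambda$, and if $E=\bigcup_n E_n$ with the $E_n$ pairwise disjoint then $\norma{\chi_E-\sum_{n\le k}\chi_{E_n}}{\Mar p}=\bigl|\bigcup_{n>k}E_n\bigr|^{1/p}\to0$, so the continuity of $\Lambda$ gives countable additivity; finally $\abs{\mu(E)}\le\|\Lambda\|\,\abs{E}^{1/p}$ shows $\mu\ll\mathcal L^N$. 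Thus $\mu$ is a finite signed measure absolutely continuous with respect to Lebesgue measure, and by the Radon--Nikodym theorem there is $v\in\elleom1$ with $\mu(E)=\int_E v$ for every measurable $E$.

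By linearity $\Lambda(u)=\io uv$ for every simple function $u$; approximating an arbitrary $u\in\elleom\infty$ uniformly by simple functions and using the continuity of the embedding $\elleom\infty\hookrightarrow\Mar p$ together with dominated convergence (here $v\in\elleom1$), the identity extends to all $u\in\elleom\infty$, and then, by a density argument, to all of $\Mar p$ once the correct integrability of $v$ is known. Uniqueness is immediate: if $v_1,v_2$ both represent $\Lambda$, then $\io(v_1-v_2)u=0$ for all $u\in\elleom\infty$, whence $v_1=v_2$ almost everywhere.

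The heart of the matter is to upgrade $v\in\elleom1$ to $v\in\elleom{p',1}$, the sharp Lorentz space, with a norm bound controlled by $\|\Lambda\|$. This is where Lorentz space theory enters: from $\bigl|\io uv\bigr|=\abs{\Lambda(u)}\le\|\Lambda\|\,\norma{u}{\Mar p}$ for all $u\in\elleom\infty$, one tests against the functions realising the duality between $\Mar p=\elleom{p,\infty}$ and $\elleom{p',1}$ --- that is, one invokes O'Neil's inequality and the description of the associate space of a weak Lebesgue space from \cite{Lorentz_ref} --- to conclude $\norma{v}{\elleom{p',1}}\le c_p\|\Lambda\|$, so in particular $v\in\elleom{p',1}$. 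I expect this identification of the exact exponent (and, relatedly, the passage from $\elleom\infty$ to the whole of $\Mar p$) to be the only genuinely delicate step; everything else is the standard Riesz-type representation machinery adapted to the finite-measure setting.
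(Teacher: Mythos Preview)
The paper does not prove this proposition; it states it as a known fact with a reference to \cite{Lorentz_ref}. So there is no paper proof to compare against.

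Your approach has a genuine gap at exactly the point you flag as ``the only genuinely delicate step'': extending the representation $\Lambda(u)=\io uv$ from $\elleom\infty$ to all of $\Mar p$. The obstruction is structural, not technical. The space $\elleom\infty$ (and more generally any $\elleom q$ with $q<\infty$, or the simple functions) is \emph{not} dense in $\Mar p=L^{p,\infty}(\Omega)$ under the weak-$L^p$ norm; its closure is a proper closed subspace (often written $L^{p,\infty}_0$). Consequently, two continuous functionals on $\Mar p$ that agree on $\elleom\infty$ need not agree everywhere, and no density argument can close the gap.

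This in fact points to a deeper issue with the statement as formulated: the topological dual $(L^{p,\infty})'$ is strictly larger than $L^{p',1}$; there exist continuous linear functionals on $L^{p,\infty}$ admitting no integral representation at all (they are singular in the sense of K\"othe duality). What is true, and what the paper actually uses in the duality argument of Section~\ref{Sect_DualitySol}, is the pairing inequality $\bigl|\io uv\bigr|\le C\,\norma{u}{\Mar p}\,\norma{v}{\elleom{p',1}}$ --- i.e.\ that $L^{p',1}$ is the \emph{associate} (K\"othe dual) space of $L^{p,\infty}$, so every $v\in L^{p',1}$ induces a bounded functional. Your Radon--Nikodym construction is the correct route to identifying this associate space, but it only recovers the absolutely continuous part of a general $\Lambda\in(\Mar p)'$.
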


\section{Existence of a solution with singular data}\label{Sect_existence}

To prove the existence of a distributional solution to \eqref{Problemabase} with datum $f\in\elleom1$ (often referred to as \say{singular data}), we will proceed by approximation. We begin by introducing the truncation function $T_k$ at height $k>0$:
\begin{equation}
        T_k(s) = \begin{dcases}
            k \quad & s> k\\
            s \quad & \abs s\leq k\\
            -k \quad & s<- k
        \end{dcases}
\end{equation}
and define $G_k(s)=s-T_k(s)=(\abs s-k)_+\rm{sgn}(s)$.

Given $f\in\elleom1$, we consider the approximating problem
\begin{equation}\label{Problemabase_approx}
    \left\{\begin{array}{cl}
    X^*\left(A(x) X u_n\right)=f_n(x) & \text { in } \Omega, \\
    u_n=0 & \text { on } \partial \Omega .
    \end{array}\right. 
\end{equation}
where $f_n(x)=T_n(f(x))$. We already know that, for every $n\in\N$, there exists a unique solution $u_n$ to \eqref{Problemabase_approx}. However, since $\elleom1$ is strictly larger than $\elleom {\frac{2Q}{Q+2}}$ (recall that $Q>2$), we cannot expect this sequence to be bounded in the energy space $\SobomX 2$.

The next result addresses a priori estimates a suitable Marcinkiewicz space, following the approach outlined in \cite{LavoroB6}.

\begin{theo}\label{March_estimates}
    The sequence $(u_n)_n$ is bounded in $\Mar{\frac{Q}{Q-2}}$ and the sequence of $X$-gradients $(Xu_n)_n$ is bounded in $\Mar{\frac{Q}{Q-1}}$.
\end{theo}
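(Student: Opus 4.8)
The plan is to adapt the classical Stampacchia/Boccardo--Gall\"ou\"et technique for $L^1$ data to the $X$-elliptic setting, using the $X$-ellipticity inequality in place of the usual coercivity and the Sobolev inequality of Section \ref{Sect_Geometric assumptions} in place of the Euclidean one. The key observation is that the truncations $T_k(u_n)$ and the functions $G_k(u_n)$ belong to $\SobomX2$ (by the Lipschitz composition lemma) and hence are admissible test functions in the weak formulation of \eqref{Problemabase_approx}.

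\textbf{Step 1: Energy estimate on truncations.} First I would take $T_k(u_n)$ as test function in \eqref{Problemabase_approx}. Since $X(T_k(u_n)) = \rchi_{\{\abs{u_n}\leq k\}} Xu_n$, the $X$-ellipticity gives
$$\alpha\io \abs{X T_k(u_n)}^2 \leq \io A(x) Xu_n\cdot X T_k(u_n) = \io f_n\, T_k(u_n) \leq k\,\norma{f}{\elleom1},$$
using $\abs{f_n}\leq\abs f$ and $\abs{T_k(u_n)}\leq k$. Combined with the Sobolev inequality this yields $\norma{T_k(u_n)}{\elleom{2^*}}^2 \leq C\,k$ with $C$ independent of $n$ and $k$.

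\textbf{Step 2: From the truncation bound to the Marcinkiewicz bound on $u_n$.} On the set $\{\abs{u_n}\geq k\}$ one has $\abs{T_k(u_n)}=k$, so
$$k^{2^*}\abs{\{\abs{u_n}\geq k\}} \leq \io \abs{T_k(u_n)}^{2^*} \leq C^{2^*/2}\, k^{2^*/2},$$
hence $\abs{\{\abs{u_n}\geq k\}} \leq C' k^{-2^*/2}$ for all $k>0$. Since $2^*/2 = \frac{Q}{Q-2}$, this is exactly the statement that $(u_n)_n$ is bounded in $\Mar{\frac{Q}{Q-2}}$.

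\textbf{Step 3: Marcinkiewicz bound on the gradients.} This is the part requiring more care. Following Boccardo--Gall\"ou\"et, for $\lambda>0$ I would split $\{\abs{Xu_n}\geq\lambda\}$ according to whether $\abs{u_n}$ is below or above a threshold $k$ (to be optimized in $\lambda$):
$$\abs{\{\abs{Xu_n}\geq\lambda\}} \leq \abs{\{\abs{Xu_n}\geq\lambda,\ \abs{u_n}\leq k\}} + \abs{\{\abs{u_n}>k\}}.$$
The first term is estimated via Chebyshev by $\lambda^{-2}\io\abs{XT_k(u_n)}^2 \leq C k \lambda^{-2}$ from Step 1; the second term is estimated by $C' k^{-\frac{Q}{Q-2}}$ from Step 2. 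Choosing $k = \lambda^{\frac{2(Q-2)}{2Q-2}}$ (so that the two powers of $\lambda$ balance) gives, after a short computation, $\abs{\{\abs{Xu_n}\geq\lambda\}} \leq C''\lambda^{-\frac{Q}{Q-1}}$, which is the claimed bound in $\Mar{\frac{Q}{Q-1}}$. I expect the main obstacle to be bookkeeping the exponents correctly in this optimization and checking that all constants genuinely depend only on $\alpha$, $\mathcal S_1$, $\norma{f}{\elleom1}$ and $Q$ (hence not on $n$); the functional-analytic ingredients themselves are all available from the earlier sections.
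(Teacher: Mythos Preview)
Your proposal is correct and follows essentially the same route as the paper: test with $T_k(u_n)$, combine $X$-ellipticity with the Sobolev inequality to get the Marcinkiewicz bound on $u_n$, then split $\{\abs{Xu_n}\geq\lambda\}$ according to the size of $\abs{u_n}$ and optimize in $k$ (the paper phrases the last step as ``minimizing over $k$'' rather than writing out $k=\lambda^{2(Q-2)/(2Q-2)}$, but the computation is the same). One minor slip: the Sobolev constant that enters is $\mathcal S_2$, not $\mathcal S_1$.
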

\begin{proof}
    Let $k>0$ and choose $T_k(u_n)$ as a test function in \eqref{Problemabase_approx}. Since $T_k'(s)=\rchi_{\{\abs s\leq k\}}$, we obtain
    $$\io A(x)X T_k(u_n)\cdot X T_k(u_n)=
    \io A(x)X u_n\cdot X T_k(u_n)=\io f_n(x)T_k(u_n)$$
    By $X$-ellipticity and the Sobolev inequality, we get
    $$\frac{\alpha}{\mathcal{S}_2^2}\left(\io \abs{T_k(u_n)}^{\frac{2Q}{Q-2}}\right)^{\frac{Q-2}{Q}}
    \leq 
    \alpha\io \abs{X T_k(u_n)}^2
    \leq 
    k\norma{f}{\elleom1}.$$
    Thus, narrowing the integration domain on the left hand side, we obtain
    $$k^2\frac{\alpha}{\mathcal S^2}\abs{\{\abs{u_n}\geq k\}}^\frac{Q-2}{Q}
    \leq
    \frac{\alpha}{\mathcal{S}_2^2}\left(\int_{\abs{u_n}\geq k} \abs{T_k(u_n)}^{\frac{2Q}{Q-2}}\right)^{\frac{Q-2}{Q}}
    \leq
    k\norma{f}{\elleom1}.
    $$
    Hence,
    $$\abs{\{\abs{u_n}\geq k\}}\leq C\frac{\norma{f}{\elleom1}^{\frac{Q}{Q-2}}}{k^{\frac{Q}{Q-2}}}.$$
    Let $t>0$. We want to estimate $\abs{\{\abs{Xu_n}\geq t\}}$. To that end, let $k>0$ (to be chosen later) and note that
    $$\abs{\{\abs{Xu_n}\geq t\}}
    \leq 
    \abs{\{\abs{Xu_n}\geq t\,,\,\abs{u_n}\leq k\}}
    +\abs{\{\abs{u_n}\geq k\}}.$$
    Now let us choose $T_k(u_n)$ as a test function in \eqref{Problemabase_approx} to obtain
    $$\alpha\io \abs{X u_n}^2\rchi_{\{\abs{u_n}\leq k\}}
    \leq 
    \alpha\io \abs{X T_k(u_n)}^2
    \leq 
    k\norma{f}{\elleom1}.$$
    Thus, narrowing the integration domain on the left hand side, we obtain
    $$\alpha t^2 \abs{\{\abs{Xu_n}\geq t\,,\,\abs{u_n}\leq k\}}
    \leq 
    \alpha\io \abs{X u_n}^2\rchi_{\{\abs{Xu_n}\geq t\,,\,\abs{u_n}\leq k\}}
    \leq 
    k\norma{f}{\elleom1}.$$
    Putting everything together, we obtain
    $$\abs{\{\abs{Xu_n}\geq t\}}
    \leq
    \alpha\frac{k}{t^2}\norma{f}{\elleom1}+C\frac{\norma{f}{\elleom1}^{\frac{Q}{Q-2}}}{k^{\frac{Q}{Q-2}}},
    $$
    which, minimizing over $k$, leads to
    $$
    \abs{\{\abs{Xu_n}\geq t\}}
    \leq
    C\frac{\norma{f}{\elleom1}^{\frac{Q}{Q-1}}}{t^{\frac{Q}{Q-1}}},
    $$
    thereby concluding the proof.
\end{proof}

\Cref{March_estimates} implies the convergence, up to a subsequence, of $u_n$ to a function $u$, both weakly in $\SobomX q$ for every $q<\frac{Q}{Q-1}$ and strongly in $\elleom s$ for every $s<\frac{Q}{Q-2}$. It follows that, given $\varphi\in C^1_c(\Omega)$, we can pass to the limit as $n\to\infty$ in 
$$\io A(x)X u_n X \varphi = \io f_n(x)\varphi$$
to prove that $u$ is a distributional solution to \eqref{Problemabase}.

% \begin{equation}\label{PassoAlLimite?}
%     \io A(x)\nabla u_n \nabla \varphi = \io f_n(x)
% \end{equation}
% to conclude that $u$ is a distributional solution to \eqref{Problemabase}. Note that the weak convergence of the X-gradients is, in principle, not enough to ensure the convergence of the first term, since we do not know that $\nabla u_n$ converges (nor that it is bounded) in any Lebesgue space. However, we can proceed as follows: consider the linear map 

% \begin{align*}
%     \Lambda\colon &\SobomX1\to \R\\
%     &v\longmapsto\io A(x)\nabla v\cdot\nabla\varphi
% \end{align*}

% Applying the Cauchy-Schwarz inequality along with \eqref{X-elliptic-pointwise} leads to

% $$\Lambda(v)\leq 
% \io \left(A(x)\nabla v\cdot\nabla v\right)^\frac{1}{2}\left(A(x)\nabla \varphi\cdot\nabla \varphi\right)^\frac{1}{2}
% \leq
% C\beta\io \left(\sum_{j=1}^m \abs{c_j(x)\cdot\nabla v}^2\right)^\frac{1}{2}\leq$$
% $$
% =C\beta\io \left(\sum_{j=1}^m \abs{X_j(x)(v)}^2\right)^\frac{1}{2}\leq \tilde C\io \abs{Xu}
% $$
% where the last step follows from the equivalence of norms in $\R^k$. This implies that $\Lambda$ belongs to the dual space of $\SobomX1$ which, in turn, allows us to pass to the limit in \eqref{PassoAlLimite?} as $n\to\infty$, since $X(u)$ is weakly convergent in $L^s$ for every $s\in(1,\frac{Q}{Q-1})$.

\begin{remark}\label{Remark:measurebyapprox}
    The same approach can be used to prove the existence of a distributional solution to the problem $$Lu=\mu$$
    where $\mu$ is a Radon measure on $\Omega$. In this case, one would choose a sequence $(f_n)_n\subset\elleom1$ that converges to $\mu$ with respect to the weak-$*$ topology of measures (note that such sequence can be chosen, without loss of generality, to have total mass equal to the one of $\mu$).
    We will discuss the properties of solutions to equations with measure data in Section \ref{Sect_Measuredata}.
\end{remark}

Note that the limit point $u$ of $(u_n)_n$ may, in principle, not be unique. Indeed, the works by Serrin \cite{Serrin1964} and Prignet \cite{Prignet2007RemarksOE} show that there may be infinitely many distributional solutions (with infinite energy) to an elliptic equation.
The next result, however, shows the solution found by approximation is unique. This is the main reason to consider the solution found by approximation as the \say{correct one} when the definition of weak solution is not available.

\begin{theo}\label{Thm:uniqueapprox}
    Let $f\in\elleom1$, then the solution $u$ found by approximation is unique and does not depend on the approximating sequence $(f_n)_n$.
\end{theo}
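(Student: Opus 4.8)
The plan is to prove a stronger statement by a comparison/linearity argument: if $u$ is the solution found by approximation associated with a datum $f \in \elleom1$ and approximating sequence $(f_n)_n$, and $v$ is any solution found by approximation associated with the same $f$ (and some other approximating sequence $(g_n)_n$), then $u = v$; in particular the limit is independent of the approximating sequence. The key device is to test the equations against a truncation of the difference $u_n - v_n$ (where $u_n$ solves the problem with datum $f_n$ and $v_n$ with datum $g_n$), exploiting linearity of $L$. Since $u_n - v_n$ itself need not lie in the energy space, I would instead work with $T_k(u_n - v_n) \in \SobomX 2$, which is an admissible test function in the approximate problems \eqref{Problemabase_approx}.

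First I would subtract the two approximate equations to get, for every $\varphi \in C^1_c(\Omega)$,
$$\io A(x) X(u_n - v_n)\cdot X\varphi = \io (f_n - g_n)\varphi,$$
and by density this extends to $\varphi \in \SobomX2$, in particular to $\varphi = T_k(u_n - v_n)$. Using $X$-ellipticity and $|T_k'| \le 1$ this gives
$$\alpha \io \abs{X T_k(u_n - v_n)}^2 \le k\,\norma{f_n - g_n}{\elleom1} \le k\left(\norma{f_n - f}{\elleom1} + \norma{g_n - f}{\elleom1}\right) =: k\,\varepsilon_n,$$
with $\varepsilon_n \to 0$ since $T_n(f) \to f$ in $\elleom1$ (dominated convergence) and likewise for $(g_n)_n$. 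By the Sobolev inequality this bounds $\norma{T_k(u_n - v_n)}{\elleom{2^*}}$, hence $\norma{T_k(u_n - v_n)}{\elleom1}$, by $C(k\varepsilon_n)^{1/2}$. Now I would use the a priori Marcinkiewicz bounds from \Cref{March_estimates}: both $(u_n)_n$ and $(v_n)_n$ are bounded in $\Mar{Q/(Q-2)}$, hence so is $(u_n - v_n)_n$, which yields a uniform (in $n$) bound $\abs{\{\abs{u_n - v_n} \ge k\}} \le C/k^{Q/(Q-2)}$, and therefore $\norma{G_k(u_n - v_n)}{\elleom1} = \int_{\abs{u_n - v_n}\ge k}(\abs{u_n - v_n} - k) \le C k^{1 - Q/(Q-2)} = C k^{-2/(Q-2)} \to 0$ uniformly in $n$ as $k \to \infty$. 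Combining, $\norma{u_n - v_n}{\elleom1} \le C(k\varepsilon_n)^{1/2} + C k^{-2/(Q-2)}$; letting $n \to \infty$ first (so $u_n \to u$, $v_n \to v$ strongly in $\elleom1$ along a common subsequence) and then $k \to \infty$ gives $\norma{u - v}{\elleom1} = 0$, i.e. $u = v$.

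The main obstacle is the bookkeeping around the non-uniqueness of limit points and subsequences: a priori the approximating sequences only converge along subsequences, so I must be careful to extract a single subsequence along which both $u_n \to u$ and $v_n \to v$ strongly in $\elleom1$ (possible by \Cref{March_estimates} and the compact embedding, since $\Mar{Q/(Q-2)} \hookrightarrow \elleom s$ compactly for $s < Q/(Q-2)$, and $1 < Q/(Q-2)$), and that the final estimate $\norma{u_n - v_n}{\elleom1} \le C(k\varepsilon_n)^{1/2} + Ck^{-2/(Q-2)}$ holds with constants uniform in $n$ and $k$ before passing to either limit. A secondary point to handle cleanly is justifying that $T_k(u_n - v_n)$ is an admissible test function, which follows from $T_k(u_n - v_n) \in \SobomX2$ (truncations of $W^{1,2}_0(\Omega,X)$ functions stay in the space) together with the density of $C^1_c(\Omega)$ used to upgrade the test-function class in the subtracted equation. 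Once these points are in place, the independence from the approximating sequence is immediate: the same computation with $v_n = u_n'$ the solutions from a second approximating sequence for the \emph{same} $f$ shows the two limits coincide.
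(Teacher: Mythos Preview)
Your argument is correct, but the paper's proof is considerably shorter because it applies \Cref{March_estimates} \emph{directly to the difference equation}. By linearity, $u_n-v_n$ solves $L(u_n-v_n)=f_n-g_n$ with zero boundary data, so \Cref{March_estimates} gives at once
\[
\norma{u_n-v_n}{\Mar{\frac{Q}{Q-2}}}\le C\norma{f_n-g_n}{\elleom1},
\]
and likewise $\norma{u_n-u_m}{\Mar{Q/(Q-2)}}\le C\norma{f_n-f_m}{\elleom1}$. The Cauchy property of $(f_n)$ in $\elleom1$ then makes $(u_n)$ Cauchy in $\Mar{Q/(Q-2)}$, so the full sequence converges, and the second estimate shows two approximating sequences yield the same limit. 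No $T_k/G_k$ splitting, no double limit in $(n,k)$, and no subsequence bookkeeping is needed.

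What you do instead is reprove the truncation energy estimate for the difference and then control the tail $G_k(u_n-v_n)$ using the Marcinkiewicz bounds on $u_n$ and $v_n$ \emph{separately}. This works, but it is essentially redoing half of \Cref{March_estimates} by hand while only citing the other half. The one place where your route could be tightened: rather than extracting a common subsequence for $u_n\to u$ and $v_n\to v$, note that your estimate actually shows $\norma{u_n-v_n}{\elleom1}\to 0$ (choose $k$ large, then $n$ large), and the same computation with $v_n$ replaced by $u_m$ shows $(u_n)$ is Cauchy in $\elleom1$. That removes the subsequence issue entirely and makes the argument cover both uniqueness of the limit and independence of the approximating sequence in one stroke, just as the paper's version does.
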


\begin{proof}
    Let $n,m\in\N$. By linearity, $u_n-u_m$ solves the problem
    \begin{equation}
        X^*(A(x)X(u_n-u_m))=f_n(x)-f_m(x)        
    \end{equation}
    with zero Dirichlet boundary conditions. We can thus invoke Theorem \ref{March_estimates} to conclude that
    $$\norma{u_n-u_m}{\Mar {\frac{Q}{Q-2}}}\leq C\norma{f_n-f_m}{\elleom1}.$$
    Since the sequence $(f_n)_n$ is Cauchy in $\elleom1$, the sequence $(u_n)_n$ is also Cauchy in $\Mar{\frac{Q}{Q-2}}$, which implies the uniqueness of the solution found by approximation (for any given sequence $(f_n)_n$ which approximates $f$).

    Now let $(f_n)_n$ and $(g_n)_n$ be two sequences of $\elleom {\frac{2Q}{Q+2}}$ functions which converge to $f$ in $\elleom1$. Let $u_n$ and $v_n$ be the corresponding solutions to \eqref{Problemabase_approx}. By linearity, $u_n-v_n$ solves the problem
    \begin{equation}
        X^*(A(x)X(u_n-v_n))=f_n(x)-g_n(x)        
    \end{equation}
    with zero Dirichlet boundary conditions. We can thus apply  \Cref{March_estimates} again to conclude that
    $$\norma{u_n-v_n}{\Mar {\frac{Q}{Q-2}}}\leq C\norma{f_n-g_n}{\elleom1}.$$
    Since the term on the right goes to zero as $n\to\infty$, the sequences $(u_n)_n$ and $(v_n)_n$ have the same limit.
\end{proof}

Since \Cref{Thm:uniqueapprox} provides a notion of solution that applies to any datum belonging, at least, to $\elleom1$. We will, in what follows, refer to a function solving \eqref{Problemabase}, whether in the weak sense or obtained via approximation, simply as a \say{solution}. The precise meaning should be understood in relation to the regularity of the given datum.

\section{Regularity of the solutions}\label{Sect_LpRegularity}

In this section, we investigate the regularity properties of the solution to \eqref{Problemabase}. More precisely, we prove that an increase in summability of the datum $f$ of \eqref{Problemabase} leads to an increase in summability of the solution $u$.

Our first result consider data with high summability. To prove it, we will use the following lemma from \cite{StampacchiaElliptic}.
\begin{lemma}\label{LemmaStampacchiaLinfty}
    Let $\psi: \mathbb{R}^{+} \rightarrow \mathbb{R}^{+}$be a nonincreasing function such that
    $$
    \psi(h) \leq \frac{M \psi(k)^\delta}{(h-k)^\gamma}, \quad \forall\, h>k \geq 0,
    $$
    for some $M>0, \delta>1$ and $\gamma>0$. Then $\psi$ has a zero $d\in\R$, where
    $$
    d^\gamma=M \psi(0)^{\delta-1} 2^{\frac{\delta \gamma}{\delta-1}} .
    $$
\end{lemma}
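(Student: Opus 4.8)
The plan is to run the classical dyadic iteration due to Stampacchia. First I would fix a constant $d>0$, to be pinned down at the end, and set $k_n = d\,(1-2^{-n})$ for $n\ge 0$, so that $k_0=0$, the sequence $(k_n)_n$ increases to $d$, and $k_{n+1}-k_n = d\,2^{-(n+1)}$. Applying the hypothesis with $h=k_{n+1}$ and $k=k_n$ immediately gives the recursion
\[
\psi(k_{n+1}) \;\le\; \frac{M\,2^{\gamma(n+1)}}{d^{\gamma}}\,\psi(k_n)^{\delta},\qquad n\ge 0 .
\]
If $\psi(0)=0$ there is nothing to prove, since then $d=0$ already solves $\psi(d)=0$; so I may assume $\psi(0)>0$, whence $d>0$.

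Next I would show by induction that $\psi(k_n)\le \psi(0)\,\mu^{-n}$ for a suitably chosen $\mu>1$. The case $n=0$ is an equality. Assuming the bound at step $n$ and substituting it into the recursion, one is led to require
\[
\frac{M\,2^{\gamma}\,\psi(0)^{\delta-1}}{d^{\gamma}}\,\bigl(2^{\gamma}\mu^{1-\delta}\bigr)^{n}\;\le\;\mu^{-1}\qquad\text{for all } n\ge 0 .
\]
The key observation is to choose $\mu$ so that the geometric factor disappears, i.e. $2^{\gamma}\mu^{1-\delta}=1$, which forces $\mu = 2^{\gamma/(\delta-1)}>1$ (here $\delta>1$ and $\gamma>0$ are used). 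With this choice the requirement collapses to $d^{\gamma}\ge M\,\psi(0)^{\delta-1}\,2^{\gamma}\,\mu = M\,\psi(0)^{\delta-1}\,2^{\gamma\delta/(\delta-1)}$, and taking equality gives precisely the value of $d$ stated in the lemma. The induction then closes.

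Finally, from $\psi(k_n)\le\psi(0)\,\mu^{-n}$ and $\mu>1$ we get $\psi(k_n)\to 0$; since $\psi$ is nonincreasing and $d\ge k_n$ for every $n$, we have $0\le\psi(d)\le\psi(k_n)\to 0$, hence $\psi(d)=0$. There is no serious obstacle here: the only delicate point is the bookkeeping in the induction, and in particular the calibration $\mu=2^{\gamma/(\delta-1)}$ that balances the exponential growth $2^{\gamma n}$ in the recursion against the decay $\mu^{-n\delta}$ produced by the superlinear exponent $\delta>1$; once $\mu$ is fixed, $d$ is determined by requiring the step-$0$ constant to be admissible.
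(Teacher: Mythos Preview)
Your argument is correct and is exactly the classical Stampacchia dyadic iteration. The paper does not give its own proof of this lemma; it simply quotes the result from \cite{StampacchiaElliptic}, and your proof is the standard one found there (and in \cite{BoccardoCroce}), so there is nothing to compare.
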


\begin{theo}\label{Stampacchia-Linfty}
    Let $f\in\elleom p$, with $p>\frac{Q}{2}$. 
    Then the solution $u$ to \eqref{Problemabase} belongs to $\elleom\infty$, and there exists a constant $C$, which depends only on $Q, \Omega, p$ and $\alpha$, such that
    $$
    \norma{u}{\elleom\infty} \leq C \norma f{\elleom p}.
    $$
\end{theo}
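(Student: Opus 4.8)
The plan is to run Stampacchia's classical $\elle\infty$ argument, with the only structural change being that the Sobolev inequality for $\Hom$ is everywhere replaced by its metric counterpart stated above. First I would record a preliminary reduction: since $Q>2$ we have $p>\frac Q2\geq 2_*=\frac{2Q}{Q+2}$, so $f\in\elleom{2_*}$ and, by \Cref{2starinbasso} together with Lax--Milgram, problem \eqref{Problemabase} has a weak solution in the energy space $\SobomX2$, which by uniqueness of the solution found by approximation is exactly $u$. In particular, for every $k>0$ the function $G_k(u)$ is an admissible test function: $G_k$ is Lipschitz with $G_k(0)=0$, so $G_k(u)\in\SobomX2$ and $XG_k(u)=\rchi_{\{\abs u>k\}}Xu$.

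Next I would test the weak formulation with $G_k(u)$. Writing $A_k:=\{\abs u>k\}$, using $X$-ellipticity on the left and the fact that $G_k(u)$ vanishes outside $A_k$ on the right gives
$$\alpha\io\abs{XG_k(u)}^2\leq\io A(x)Xu\cdot XG_k(u)=\io f\,G_k(u)\,\rchi_{A_k}.$$
To the right-hand side I apply the generalized H\"older inequality with the three exponents $p$, $2^*=\frac{2Q}{Q-2}$ and $r$, where $\frac1r=1-\frac1p-\frac1{2^*}>0$ (this positivity is where $p>\frac Q2$ and $Q>2$ are used), and then the Sobolev inequality $\norma{G_k(u)}{\elleom{2^*}}\leq\mathcal S_2\norma{XG_k(u)}{\elleom2}$. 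Cancelling one factor of $\norma{XG_k(u)}{\elleom2}$ and invoking Sobolev once more yields
$$\norma{G_k(u)}{\elleom{2^*}}\leq\frac{\mathcal S_2^2}{\alpha}\,\norma f{\elleom p}\,\abs{A_k}^{1-\frac1p-\frac1{2^*}}.$$
For $h>k$ one has $A_h\subset A_k$ and $\abs{G_k(u)}\geq h-k$ on $A_h$, whence $(h-k)\abs{A_h}^{1/2^*}\leq\norma{G_k(u)}{\elleom{2^*}}$, and therefore, with $\psi(k):=\abs{A_k}$,
$$\psi(h)\leq\frac{M\,\psi(k)^{\delta}}{(h-k)^{2^*}},\qquad M=\Bigl(\tfrac{\mathcal S_2^2}{\alpha}\norma f{\elleom p}\Bigr)^{2^*},\qquad \delta=2^*\Bigl(1-\tfrac1p-\tfrac1{2^*}\Bigr)=\tfrac{2^*}{p'}-1.$$

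The crucial point — and the only place the sharp threshold $p>\frac Q2$ is really needed — is that $\delta>1$ is equivalent to $p'<\frac{2^*}{2}=\frac{Q}{Q-2}$, i.e. to $p>\frac Q2$. Since $\psi$ is nonincreasing, \Cref{LemmaStampacchiaLinfty} applies with $\gamma=2^*$, this $\delta>1$, and this $M$, giving a zero $d$ of $\psi$ with $d^{2^*}=M\,\psi(0)^{\delta-1}2^{\delta 2^*/(\delta-1)}$. Using $\psi(0)\leq\abs\Omega$ this reads $\norma u{\elleom\infty}\leq d\leq C\norma f{\elleom p}$, with $C$ depending only on $Q$, $\abs\Omega$, $p$ and $\alpha$ (the constant $\mathcal S_2$ being itself controlled by the geometric data $C_P,\nu,Q$). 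The main obstacle here is essentially bookkeeping: keeping the Sobolev/H\"older chain of exponents consistent and verifying that the degenerate case $\delta=1$ of Stampacchia's lemma corresponds precisely to $p=\frac Q2$; no analytic difficulty beyond the metric Sobolev inequality arises.
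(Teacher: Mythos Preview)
Your argument is correct and is essentially the paper's own proof: the paper tests with $G_k(u)$, uses $X$-ellipticity, the Sobolev inequality for $\SobomX2$, H\"older, and then \Cref{LemmaStampacchiaLinfty} applied to $\psi(k)=\abs{A_k}$ with the very same $\gamma=2^*$ and $\delta=\frac{2^*}{2_*}-\frac{2^*}{p}=\frac{2^*}{p'}-1$. The only cosmetic difference is that the paper applies H\"older twice (first with exponents $2_*,2^*$, then bounding $\norma{f}{\elleom{2_*}(A_k)}$ by $\norma{f}{\elleom p}\abs{A_k}^{1/2_*-1/p}$) instead of your single three-exponent H\"older; the resulting recursive inequality and conclusion are identical.
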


\begin{remark}
    Note that, assuming $Q>2$, $\frac{Q}{2}$ is strictly greater than $\frac{2Q}{Q+2}$, which implies that $u$ should be intended as a solution in the weak sense.
\end{remark}

\begin{proof} Here, we follow \cite{StampacchiaElliptic}.
    Let $k \geq 0$ and choose $v=G_k(u)$ as test function in \eqref{Problemabase}. We now define $A_k=\{x \in \Omega:|u(x)| \geq k\}$. 
    Computing the $X$-gradient of $G_k(u)$ we get
    $$
    \io A(x)  Xu \cdot  Xu \rchi_{A_k} = \io f G_k(u).
    $$
    We now recall that $G_k(u)=0$ outside of $A_k$, so that
    $$
    \alpha \int_{A_k}\left| X G_k(u)\right|^2 
    \leq 
    \io A(x)  Xu \cdot  Xu \rchi_{A_k}
    =
    \int_{A_k} f G_k(u).
    $$
    Using the Sobolev inequality (on the left-hand side), and the H\"older inequality (on the right-hand side), one has
    $$
    \frac{\alpha}{\mathcal{S}_2^2}\left(\int_{A_k}\left|G_k(u)\right|^{2^*}\right)^{\frac{2}{2^ *}} \leq\left(\int_{A_k}\abs{f}^{\frac{2Q}{Q+2}}\right)^{\frac{1}{2_*}} \left(\int_{A_k}\left|G_k(u)\right|^{2^*}\right)^{\frac{1}{2^*}} .
    $$
    Simplifying equal terms (note that if $G_k$ was zero and the simplification was not allowed, we would already know that $\abs{u}\leq k$ and the proof would be complete), we have
    $$
    \int_{A_k}\left|G_k(u)\right|^{2^*} \leq\left(\frac{\mathcal{S}_2^2}{\alpha}\right)^{2^*}\left(\int_{A_k}\abs{f}^{2_ *}\right)^{\frac{2^*}{2_ *}} .
    $$
    Since we know that $f$ belongs to $\elleom p$, with $p>\frac{Q}{2}\geq 2_*=\frac{2Q}{Q+2}$, we can use H\"older's inequality once again to get
    $$
    \int_{A_k}\left|G_k(u)\right|^{2^*} \leq\left(\frac{\mathcal{S}_2^2\|f\|_{\elleom p}}{\alpha}\right)^{2^*} \abs{A_k}^{\frac{2^*}{2_*}-\frac{2^*}{p}} .
    $$
    
    We now want to apply \Cref{LemmaStampacchiaLinfty} to $\psi(k)=\abs{A_k}$: take $h>k$, so that $A_h \subseteq A_k$, and $G_k(u) \geq h-k$ on $A_h$. Thus,
    $$
    (h-k)^{2^*} \abs{A_h} \leq\left(\frac{\mathcal{S}_2^2\|f\|_{\elleom p}}{\alpha}\right)^{2^*} \abs{A_k}^{\frac{2^*}{2_*}-\frac{2^*}{p}},
    $$
    which implies
    $$
    \abs{A_h} \leq\left(\frac{\mathcal{S}_2^2\|f\|_{\elleom p}}{\alpha}\right)^{2^*} \frac{\abs{A_k}^{\frac{2^*}{2_*}-\frac{2^*}{p}}}{(h-k)^{2^*}} .
    $$
    Naming 
    $$
    M=\left(\frac{\mathcal{S}_2^2\|f\|_{\elleom p}}{\alpha}\right)^{2^*}, \quad \gamma=2^* \quad \delta=\frac{2^*}{2_*}-\frac{2^*}{p}=\frac{2p+pQ-2Q}{pQ-2Q}
    $$
    and observing that the assumption $p>\frac Q2$ is equivalent to $\delta>1$, we get that
    $\psi(k)=\abs{A_k}$ satisfies the assumptions of the previous lemma.
    We thus get that $\psi(d)=0$, where
    $$
    d^{2^*}=C(\Omega, Q, p, \alpha) M .
    $$
    Since $\abs{A_d}=0$, we have $|u| \leq d$ almost everywhere, which implies
    $$
    \|u\|_{\elle{\infty}(\Omega)} \leq d=C(Q, \Omega, p, \alpha)\|f\|_{\elleom p},
    $$
    which concludes the proof.
\end{proof}

We now prove that, as in the elliptic case, the summability of $f$ directly affects the summability of the solution.

\begin{theo}\label{Stampacchia-p**-grande}
    Let $f$ belong to $\elleom p$, with $2_*=\frac{2Q}{Q+2} \leq p<\frac{Q}{2}$. Then the solution $u$ to \eqref{Problemabase} belongs to $\elleom {p^{* *}}$, with $p^{* *}=\frac{pQ}{Q-2 p}$, and there exists a constant $C$, only depending on $Q, \Omega, m$ and $\alpha$, such that
    $$
    \norma u{\elleom{p^{* *}}}\leq C\norma f{\elleom p}.
    $$
\end{theo}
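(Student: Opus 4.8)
The plan is to adapt the classical Stampacchia–Boccardo–Gallouët iteration that proves $L^p \to L^{p^{**}}$ regularity for uniformly elliptic equations, using $G_k(u)$ as a test function together with the Sobolev inequality for $\SobomX{2}$. Since $p < Q/2$ the solution need not be bounded, so instead of reaching $L^\infty$ via \Cref{LemmaStampacchiaLinfty} we will extract a quantitative decay estimate on the super-level sets $A_k = \{|u| \geq k\}$ and convert it into an $L^{p^{**}}$ bound. Because $p \geq 2_* = \frac{2Q}{Q+2}$, the datum $f$ lies in $(W^{1,2}_0(\Omega,X))'$, so $u$ is a genuine weak solution and $G_k(u) \in \SobomX{2}$ is an admissible test function.

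First I would test the equation with $v = G_k(u)$. Exactly as in the proof of \Cref{Stampacchia-Linfty}, $X$-ellipticity on the left and the Sobolev inequality give
$$
\frac{\alpha}{\mathcal{S}_2^2}\left(\int_{A_k}|G_k(u)|^{2^*}\right)^{\frac{2}{2^*}} \leq \int_{A_k} f\, G_k(u),
$$
and then Hölder with exponents $p$, $2^*$ and the remaining one (which is admissible precisely because $\frac{1}{p} + \frac{1}{2^*} < 1$ when $p < Q/2$) yields
$$
\left(\int_{A_k}|G_k(u)|^{2^*}\right)^{\frac{1}{2^*}} \leq \frac{\mathcal{S}_2^2}{\alpha}\,\norma{f}{\elleom p}\,|A_k|^{1 - \frac{1}{p} - \frac{1}{2^*}}.
$$
From here there are two standard routes; I would follow the one via Marcinkiewicz/level-set estimates. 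On $A_h$ with $h > k$ one has $G_k(u) \geq h - k$, so $(h-k)|A_h|^{1/2^*} \leq \frac{\mathcal{S}_2^2}{\alpha}\norma{f}{\elleom p}|A_k|^{1-\frac1p-\frac1{2^*}}$, i.e. a Stampacchia-type inequality $\psi(h) \leq M\psi(k)^{\delta}/(h-k)^{\gamma}$ with $\gamma = 2^*$ and $\delta = 2^*\big(1 - \frac1p - \frac1{2^*}\big) = 2^* - 1 - \frac{2^*}{p}$. Since $p < Q/2$ forces $\delta > 1$, a classical iteration lemma (in the Boccardo–Gallouët spirit, rather than \Cref{LemmaStampacchiaLinfty} which requires a finite measure domain and gives $L^\infty$) shows $|A_k| \leq C\,\norma{f}{\elleom p}^{\sigma} k^{-\tau}$ for the exponents $\sigma, \tau$ obtained by balancing; an elementary check identifies $\tau = p^{**}$, so $u \in \Mar{p^{**}}(\Omega) \hookrightarrow \elleom{q}$ for $q < p^{**}$. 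To reach $L^{p^{**}}$ itself (the sharp endpoint), I would instead iterate on the full norm: set $a_k := \int_{A_k}|G_k(u)|^{2^*}$, combine the displayed inequality with the bound $|A_k| \leq k^{-2^*}\int_{A_{k/2}}|G_{k/2}(u)|^{2^*}$ (valid since $|G_{k/2}(u)| \geq k/2$ on $A_k$), and run the standard geometric iteration over dyadic heights $k_j = R(1 - 2^{-j})$, choosing $R$ large depending on $\norma{f}{\elleom p}$, to conclude $\int_\Omega |u|^{p^{**}} \leq C \norma{f}{\elleom p}^{p^{**}}$ with the scaling-correct constant.

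The main obstacle is the passage from the weak-type (Marcinkiewicz) estimate to the strong $L^{p^{**}}$ bound with the correct homogeneity: one must verify that the arithmetic of the exponents — $2^*$, $2_*$, $p$ and the iteration's contraction rate — combines to give exactly $p^{**} = \frac{pQ}{Q-2p}$ and a constant depending only on $Q, \Omega, \alpha$ (and on $\mathcal{S}_2$, hence on $C_P, \nu, Q$), not on $p$; this is where the condition $p \geq 2_*$ is used to keep $G_k(u)$ a legitimate test function and $p < Q/2$ to keep $\delta > 1$. A secondary point is that for the solution found by approximation (when one only assumes $p \geq 2_*$, so $u$ is weak, this is automatic) one should note the a priori estimate is first proved on the approximating solutions $u_n$ and then passed to the limit using \Cref{March_estimates} and lower semicontinuity of the $L^{p^{**}}$ norm under the weak/a.e. convergence already established; since here $p \geq 2_*$ the weak formulation is available directly and this is a non-issue, but stating it avoids any circularity.
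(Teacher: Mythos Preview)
Your approach via the test function $G_k(u)$ is different from the paper's and contains a sign error that undermines the argument. You claim that $p<Q/2$ forces $\delta>1$; in fact the opposite is true. With $\delta = 2^*\big(1-\tfrac1p-\tfrac{1}{2^*}\big)$ one computes $\tfrac{1}{2_*}-\tfrac{1}{2^*}=\tfrac{2}{Q}$, so $\delta>1$ is equivalent to $p>\tfrac{Q}{2}$, which is exactly the $L^\infty$ regime of \Cref{Stampacchia-Linfty}. In the present range $2_*\le p<\tfrac{Q}{2}$ you have $0\le\delta<1$, and the Stampacchia recursion $\psi(h)\le M\psi(k)^\delta/(h-k)^\gamma$ with $\delta<1$ yields only the power decay $\psi(k)\le Ck^{-\gamma/(1-\delta)}$. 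Your computation $\gamma/(1-\delta)=p^{**}$ is correct, but the conclusion is then merely $u\in\Mar{p^{**}}$, not $u\in\elleom{p^{**}}$. The dyadic refinement you sketch to upgrade to the strong norm is not carried out, and in general there is no free passage from $\Mar{p^{**}}$ to $\elleom{p^{**}}$; the standard $G_k$ route genuinely stops at the weak space. (A smaller point: the admissibility of the three-term H\"older you invoke requires $\tfrac1p+\tfrac1{2^*}\le 1$, i.e.\ $p\ge 2_*$, not $p<\tfrac{Q}{2}$ as you write.)

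The paper bypasses this difficulty by testing with a \emph{power} of the truncation, $v=|T_k(u)|^{2\gamma-2}T_k(u)$ with $\gamma>1$. After $X$-ellipticity and Sobolev on the left and H\"older with exponent $p$ on the right, one obtains
\[
\left(\int_\Omega |T_k(u)|^{\gamma 2^*}\right)^{2/2^*}\le C\,\|f\|_{\elleom p}\left(\int_\Omega |T_k(u)|^{(2\gamma-1)p'}\right)^{1/p'}.
\]
Choosing $\gamma$ so that $\gamma 2^*=(2\gamma-1)p'$, i.e.\ $\gamma=p^{**}/2^*$ (and $\gamma>1$ precisely when $p>2_*$), both integrals coincide; since $p<\tfrac{Q}{2}$ gives $\tfrac{2}{2^*}>\tfrac{1}{p'}$, one can absorb the right-hand side and obtain the strong $\elleom{p^{**}}$ bound directly, then let $k\to\infty$ by Fatou. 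This is the Boccardo--Gallou\"et test function; it is the key idea your proposal is missing.
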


\begin{proof} Here we follow \cite{BoccardoGallouet}.
    We begin by observing that if $p=2_*$, then $p^{* *}=2^*$, so that the result is true in this limit case by the Sobolev embedding. Therefore, we only have to deal with the case $p>2_*$.
    
    Let $k>0$, $\gamma>1$ and choose $v=\left|T_k(u)\right|^{2 \gamma-2} T_k(u)$ as test function in \eqref{Problemabase_approx}. Computing the $X$-gradient, we get
    $$
    (2 \gamma-1) \io A(x)  Xu \cdot X T_k(u)\left|T_k(u)\right|^{2 \gamma-2}
    =
    \io f(x)\left|T_k(u)\right|^{2 \gamma-2} T_k(u) .
    $$
    We now observe that $X u=X T_k(u)$ where $X T_k(u) \neq 0$ and use the $X$-ellipticity to obtain
    $$
    \alpha(2 \gamma-1) \io\abs{X T_k(u)}^2\left|T_k(u)\right|^{2 \gamma-2} \leq \io\abs{f(x)}\left|T_k(u)\right|^{2 \gamma-1} .
    $$
    Since $\left|X T_k(u)\right|^2\left|T_k(u)\right|^{2 \gamma-2}=\frac{1}{\gamma^2}\left|X( T_k(u)^\gamma)\right|^2$, we have
    $$
    \frac{\alpha(2 \gamma-1)}{\gamma^2} \io\left|X( T_k(u)^\gamma)\right|^2 \leq \io\abs{f(x)}\left|T_k(u)\right|^{2 \gamma-1} .
    $$
    Using Sobolev inequality and H\"older inequality, we obtain
    $$
    \frac{\alpha(2 \gamma-1)}{\mathcal{S}_2^2 \gamma^2}\left(\io\left|T_k(u)\right|^{\gamma2^*}\right)^{\frac{2}{2^*}} 
    \leq
    \norma f{\elleom p}\left(\io\left|T_k(u)\right|^{(2 \gamma-1) p^{\prime}}\right)^{\frac{1}{p^{\prime}}} .
    $$
    We now choose $\gamma$ so that $\gamma 2^*=(2 \gamma-1) p^{\prime}$, that is $\gamma=\frac{p^{* *}}{2^*}$. 
    Observe that, with this choice, $\gamma>1$ if and only if $p>2_*$ (which is true). 
    Since $p<\frac{Q}{2}$, we also have $\frac{2}{2^*}>\frac{1}{p^{\prime}}$, and so
    $$
    \left(\io\left|T_k(u)\right|^{p^{* *}}\right)^{\frac{2}{2^*}-\frac{1}{p'}} \leq C(Q, \Omega, p, \alpha)\norma f{\elleom p}.
    $$
    It now suffice to observe that $\frac{2}{2^*}-\frac{1}{p'}=\frac1{p^{* *}}$ and apply Fatou's lemma as $k\to+\infty$ to get
    $$
    \norma u{\elle{p^{* *}}}\leq C(Q, \Omega, p, \alpha)\norma f{\elleom p},
    $$which concludes the proof.
\end{proof}

Now we prove an analogous result in the case $p\in(1,2_*)$.

\begin{theo}\label{Stampacchia-p**-piccolo}
    Let $f$ belong to $\elleom p$, with $1 < p<2_*$. Then the solution $u$ to \eqref{Problemabase} found by approximation belongs to $\elleom {p^{* *}}$, with $p^{* *}=\frac{pQ}{Q-2 p}$, and there exists a constant $C$, only depending on $Q, \Omega, p$ and $\alpha$, such that
    $$
    \norma {u}{\elleom{p^{* *}}}\leq C\norma f{\elleom p}.
    $$
\end{theo}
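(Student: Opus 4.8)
The plan is to follow the scheme of \Cref{Stampacchia-p**-grande}: derive an estimate uniform in $n$ for the approximating solutions $u_n$ of \eqref{Problemabase_approx}, and then pass to the limit. Set $\gamma:=\frac{p^{**}}{2^*}$; a direct computation shows that the hypothesis $1<p<2_*$ is equivalent to $\frac12<\gamma<1$, i.e.\ to $0<2\gamma-1<1$. The test function I would use is $\phi(s):=\operatorname{sgn}(s)\big[(1+|s|)^{2\gamma-1}-1\big]$. Because $0<2\gamma-1<1$, the function $\phi$ is of class $C^1$ with $|\phi'(s)|=(2\gamma-1)(1+|s|)^{2\gamma-2}\le 2\gamma-1<1$ and $\phi(0)=0$; hence $\phi$ is Lipschitz and, by the composition lemma, $\phi(u_n)\in\SobomX2$. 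Since $u_n$ is a weak solution of \eqref{Problemabase_approx} with bounded datum, $\phi(u_n)$ is an admissible test function. Note that it is precisely the bound $\gamma<1$, i.e.\ $p<2_*$, that forces this regularised test function instead of the power $|T_k(u)|^{2\gamma-2}T_k(u)$ used in \Cref{Stampacchia-p**-grande}, which is not Lipschitz when $\gamma<1$.

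Inserting $\phi(u_n)$ in \eqref{Problemabase_approx}, using $X$-ellipticity, and the identity $|Xu_n|^2(1+|u_n|)^{2\gamma-2}=\gamma^{-2}\big|X\big((1+|u_n|)^\gamma-1\big)\big|^2$ (the function $(1+|s|)^\gamma-1$ being again Lipschitz, $C^1$ and vanishing at $0$, so that $(1+|u_n|)^\gamma-1\in\SobomX2$), I obtain
$$
\frac{\alpha(2\gamma-1)}{\gamma^2}\io\big|X\big((1+|u_n|)^\gamma-1\big)\big|^2\le\io|f_n|\,(1+|u_n|)^{2\gamma-1}.
$$
Applying the Sobolev inequality on the left and H\"older's inequality with exponents $p,p'$ on the right, and using the defining relation $(2\gamma-1)p'=\gamma\,2^*=p^{**}$, both sides become governed by $Y_n:=\io(1+|u_n|)^{p^{**}}$: since $\big((1+t)^\gamma-1\big)^{2^*}\ge\frac12(1+t)^{p^{**}}$ for $t$ large (and this quantity is bounded on any fixed compact set), the left-hand side is bounded below by a positive multiple of $\big(\tfrac12 Y_n-c\big)_+^{2/2^*}$ with $c=c(\Omega,p,Q)$. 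Recalling $\frac{2}{2^*}-\frac1{p'}=\frac1{p^{**}}>0$ (which is where $p<\frac Q2$, automatic here, enters), this yields an estimate of the form $\norma{u_n}{\elleom{p^{**}}}\le C_1\norma f{\elleom p}+C_2$ with $C_1,C_2$ depending only on $\Omega,Q,p,\alpha$ and, crucially, not on $n$.

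To conclude I would pass to the limit: as recalled after \Cref{March_estimates}, $u_n\to u$ a.e.\ along a subsequence, so Fatou's lemma gives $u\in\elleom{p^{**}}$ together with $\norma{u}{\elleom{p^{**}}}\le C_1\norma f{\elleom p}+C_2$. The additive term $C_2$ is then removed by linearity: since $f\mapsto u$ is linear (by \Cref{Thm:uniqueapprox} and the linearity of \eqref{Problemabase}), applying the last inequality to $tf$, $t>0$, gives $t\norma{u}{\elleom{p^{**}}}\le C_1 t\norma f{\elleom p}+C_2$, and dividing by $t$ and letting $t\to\infty$ produces the homogeneous estimate claimed.

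The only genuine difficulty is technical rather than conceptual: one must carefully justify that the \emph{unbounded} test function $\phi(u_n)$ lies in $\SobomX2$, so that it may legitimately be used in the weak formulation of the approximate problem — and this is exactly where $2\gamma-1<1$, i.e.\ $p<2_*$, is indispensable — and one must keep every estimate uniform in $n$ while coping with the inhomogeneous constant introduced by the $(1+|\cdot|)$-shift, which the scaling argument above disposes of. Apart from that, the manipulation of exponents is identical to the one in \Cref{Stampacchia-p**-grande}.
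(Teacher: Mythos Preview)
Your argument is correct and complete, but it follows a different path from the paper's proof. Both proofs rest on the same choice of $\gamma=\frac{p^{**}}{2^*}\in(\tfrac12,1)$ and on the need to regularise the singularity of $|s|^{2\gamma-2}$ at the origin; the difference lies in how this regularisation is performed and how the resulting inhomogeneity is removed. The paper works with the doubly regularised test function $\operatorname{sgn}(u_n)\big[(|T_k(u_n)|+\varepsilon)^{2\gamma-1}-\varepsilon^{2\gamma-1}\big]$, passes to the limit first in $\varepsilon\to0$ (by dominated convergence, using the truncation to have a bound $\le k+1$), then in $k\to\infty$, then in $n\to\infty$; this yields the homogeneous estimate directly, at the cost of three limits. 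You instead observe that when $\gamma<1$ the shifted test function with $\varepsilon=1$ is already globally Lipschitz, so no truncation is needed; the price is an additive constant, which you eliminate by the scaling argument exploiting linearity. Your route is shorter (one limit instead of three), but note that the paper's intermediate inequality \eqref{StimaBase-p**} with the $\varepsilon$-shift is reused verbatim in the proof of \Cref{BG_gradiente}, so their extra work pays off later. One small slip: the auxiliary function $(1+|s|)^\gamma-1$ is Lipschitz but not $C^1$ at the origin (its one-sided derivatives are $\pm\gamma$); this does not affect your argument, since only Lipschitz continuity is required by the composition lemma. It would also be worth making explicit that $Y_n<\infty$ a~priori (e.g.\ because $u_n\in\elleom\infty$ by \Cref{Stampacchia-Linfty}, or simply $u_n\in\elleom{2^*}$ and $p^{**}<2^*$), so that the algebraic manipulation leading to the bound on $Y_n$ is justified.
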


\begin{proof}
    Let $k>0$, $\varepsilon>0$, $\gamma>\frac{1}{2}$ and choose $v=sgn(u_n)\left(\left(\abs{T_k(u_n)}+\varepsilon\right)^{2 \gamma-1}-\varepsilon^{2\gamma-1}\right)$ as test function in \eqref{Problemabase_approx}. Computing the $X$-gradient leads to
    $$
    (2 \gamma-1) \io A(x)  Xu \cdot X T_k(u_n)\left|T_k(u_n)+\varepsilon\right|^{2 \gamma-2}
    \leq \io\abs{f_n(x)}\left(\abs{T_k(u_n)}+\varepsilon\right)^{2 \gamma-1} .
    $$
    We now observe that $X u=X T_k(u_n)$ where $X T_k(u_n) \neq 0$ and use the $X$-ellipticity to get
    \begin{equation}\label{StimaBase-p**}
        \alpha(2 \gamma-1) \io\abs{X T_k(u_n)}^2\left(\abs{T_k(u_n)}+\varepsilon\right)^{2 \gamma-2} \leq \io\abs{f_n(x)}\left(\abs{T_k(u_n)}+\varepsilon\right)^{2 \gamma-1} .
    \end{equation}
    Since $\abs{X T_k(u_n)}^2\left(\abs{T_k(u_n)}+\varepsilon\right)^{2 \gamma-2}=\frac{1}{\gamma^2}\left|X (\abs{T_k(u_n)}+\varepsilon)^\gamma\right|^2$, we have
    $$
    \frac{\alpha(2 \gamma-1)}{\gamma^2} \io\left|X (\abs{T_k(u_n)}+\varepsilon)^\gamma\right|^2 
    \leq 
    \io\abs{f_n(x)}\left(\abs{T_k(u_n)}+\varepsilon\right)^{2 \gamma-1} .
    $$
    Using Sobolev inequality and H\"older inequality, we obtain
    \begin{equation}
        \begin{split}     
    \frac{\alpha(2 \gamma-1)}{\mathcal{S}_2^2 \gamma^2}
    \bigg(\io&(\abs{T_k(u_n)}+\varepsilon)^{\gamma2^*}-\varepsilon^{\gamma2^*}\bigg)^{\frac{2}{2^*}} \\&
    \leq
    \norma f{\elleom p}\left(\io\left(\abs{T_k(u_n)}+\varepsilon\right)^{(2 \gamma-1) p^{\prime}}\right)^{\frac{1}{p^{\prime}}} .
        \end{split}
    \end{equation}
    We now choose $\gamma$ so that $\gamma 2^*=(2 \gamma-1) p^{\prime}$, that is $\gamma=\frac{p^{* *}}{2^*}$ and pass to the limit as $\varepsilon\to 0$ by Dominated Convergence Theorem (note that $\abs{T_k(u_n)}+\varepsilon\leq k+1$ for $\varepsilon$ small enough). 
    Observe that, with this choice, $\gamma>\frac{1}{2}$ if and only if $p>1$ (which is true). We also have $\frac{2}{2^*}>\frac{1}{p^{\prime}}$, and so
    $$
    \left(\io\left|T_k(u_n)\right|^{p^{* *}}\right)^{\frac{2}{2^*}-\frac{1}{p'}} \leq C(Q, \Omega, p, \alpha)\norma f{\elleom p}.
    $$
    It now suffice to observe that $\frac{2}{2^*}-\frac{1}{p'}=\frac1{p^{* *}}$ and apply Fatou's lemma as $k\to+\infty$, to get
    $$
    \norma {u_n}{\elle{p^{* *}}}\leq C(Q, \Omega, p, \alpha)\norma f{\elleom p}.
    $$
    Applying Fatou's Lemma again as $n\to\infty$, we thus have that
    $$
    \norma {u}{\elle{p^{* *}}}\leq C(Q, \Omega, p, \alpha)\norma f{\elleom p},
    $$
    which concludes the proof.
\end{proof}

\begin{remark}
    Note that, since there are counterexamples in the uniformly elliptic case, we cannot prove the same result for $p=1$. Indeed, the regularity obtained in \Cref{March_estimates} is the best that can be attained for a general $\elle1$ datum.
\end{remark}

The next result concerns the regularity of the $X$-gradient of the solution $u$. Note that, without imposing further requirements on the matrix $A$, we should not expect $u$ to belong to a Sobolev space better than $\SobomX2$, as the regularity result by Meyers \cite{Meyers} (in the uniformly elliptic case) shows that such an improvement would depend on the ellipticity constant of $A$.
Nevertheless, in the case of infinite-energy solutions (that is, when the datum $f$ does not belong to $\elleom {2_*}$), a regularizing effect on the $X$-gradients of the solution is in place.

\begin{theo}\label{BG_gradiente}
    Let $f\in\elleom p$ with $1<p<2_*$. Then the solution $u$ to \eqref{Problemabase} found by approximation belongs to $\SobomX{p^*}$.
\end{theo}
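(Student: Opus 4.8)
The plan is to adapt the Boccardo--Gallou\"et gradient estimate, working at the level of the approximating sequence $(u_n)_n$ solving \eqref{Problemabase_approx}: I will show that $(Xu_n)_n$ is bounded in $\elleom{p^*}$ uniformly in $n$, and then pass to the limit. Since $f_n=T_n(f)\in\elleom\infty\subset\elleom{2_*}$, each $u_n$ lies in $\SobomX2$, so every test function used below is admissible by the chain-rule lemma of Section \ref{Sect_Geometric assumptions}. Set $\lambda=\frac{Q(p-1)}{Q-2p}$ and test \eqref{Problemabase_approx} with $\varphi(u_n)$, where $\varphi(s)=\left[(1+|s|)^{\lambda}-1\right]\mathrm{sgn}(s)$; the assumption $1<p<2_*$ is exactly what forces $0<\lambda<1$, so $\varphi$ is $C^1$, globally Lipschitz, $\varphi(0)=0$ and $\varphi'(s)=\lambda(1+|s|)^{\lambda-1}$. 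Using $X$-ellipticity on the left, and $|\varphi(s)|\le(1+|s|)^\lambda$, $|f_n|\le|f|$ together with H\"older on the right, one obtains
$$\alpha\lambda\io |Xu_n|^2(1+|u_n|)^{\lambda-1}\le\io|f|(1+|u_n|)^{\lambda}\le\norma f{\elleom p}\left(\io(1+|u_n|)^{\lambda p'}\right)^{1/p'}.$$
Since $\lambda p'=p^{**}$ for this $\lambda$, the right-hand side is bounded uniformly in $n$ by (the proof of) \Cref{Stampacchia-p**-piccolo}, which gives $\norma{u_n}{\elleom{p^{**}}}\le C\norma f{\elleom p}$; hence $\io|Xu_n|^2(1+|u_n|)^{\lambda-1}\le M$ for some $M=M(Q,\Omega,p,\alpha,\norma f{\elleom p})$.

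Next I would write $|Xu_n|^{p^*}=\left[|Xu_n|^2(1+|u_n|)^{\lambda-1}\right]^{p^*/2}(1+|u_n|)^{(1-\lambda)p^*/2}$ and apply H\"older with exponents $\frac{2}{p^*}>1$ (note $p^*<2$ because $p<2_*$) and $\frac{2}{2-p^*}$:
$$\io|Xu_n|^{p^*}\le\left(\io|Xu_n|^2(1+|u_n|)^{\lambda-1}\right)^{p^*/2}\left(\io(1+|u_n|)^{\frac{(1-\lambda)p^*}{2-p^*}}\right)^{\frac{2-p^*}{2}}.$$
A direct computation shows that, for the same $\lambda$, the exponent $\frac{(1-\lambda)p^*}{2-p^*}$ again equals $p^{**}$, so the second factor is bounded by \Cref{Stampacchia-p**-piccolo} and the first by the previous step; therefore $\norma{Xu_n}{\elleom{p^*}}\le C\norma f{\elleom p}$, uniformly in $n$. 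Since $(u_n)_n$ is then bounded in $\SobomX{p^*}$, a reflexive Banach space because $p^*>1$, one can extract a subsequence $u_{n_j}\rightharpoonup w$ weakly in $\SobomX{p^*}$; recalling from Section \ref{Sect_existence} that $u_n\to u$ strongly in $\elleom s$ for some $s>1$, hence in $\mathcal D'(\Omega)$, uniqueness of distributional limits yields $w=u$, so $u\in\SobomX{p^*}$ and, by weak lower semicontinuity of the norm, $\norma{Xu}{\elleom{p^*}}\le C\norma f{\elleom p}$.

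The one genuinely delicate point is the bookkeeping behind the choice of $\lambda$: it must keep $\varphi$ globally Lipschitz (i.e.\ $\lambda<1$) \emph{and} make both $\lambda p'$ and $\frac{(1-\lambda)p^*}{2-p^*}$ equal to $p^{**}$, so that \Cref{Stampacchia-p**-piccolo} can be invoked in each of the two H\"older steps; checking that the single value $\lambda=\frac{Q(p-1)}{Q-2p}$ does all three at once, and that it lies in $(0,1)$ precisely when $1<p<2_*$, is where the computation is concentrated. Everything else is routine --- the chain rule for $X\varphi(u_n)$, the $X$-ellipticity inequality, and the weak-compactness argument in the reflexive space $\SobomX{p^*}$.
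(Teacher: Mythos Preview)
Your proof is correct and follows essentially the same Boccardo--Gallou\"et strategy as the paper: your parameter $\lambda$ coincides with the paper's $2\gamma-1$ (where $\gamma=p^{**}/2^*$), and the H\"older splitting of $|Xu_n|^{p^*}$ into a weighted-gradient factor and an $\elleom{p^{**}}$ factor is identical. The only cosmetic difference is that the paper works with $|T_k(u_n)|+\varepsilon$ (reusing inequality \eqref{StimaBase-p**}) and then lets $\varepsilon\to0$, $k\to\infty$, whereas you work directly with $1+|u_n|$ and avoid the double limit.
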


\begin{proof}
    Let $q\in(1,2)$, we want to estimate
    $$\io \abs{X T_k(u_n)}^q.$$
    To do so, note that, taking $\epsilon>0$ and $\gamma$ as in the proof of \Cref{Stampacchia-p**-piccolo}, we have
    $$\io \abs{X T_k(u_n)}^q=\io \frac{\abs{X T_k(u_n)}^q}{\left(\abs{T_k(u_n)}+\varepsilon\right)^{q(1-\gamma)}}\left(\abs{T_k(u_n)}+\varepsilon\right)^{q(1-\gamma)},$$
    which, applying H\"older's inequality with exponents $\frac{2}{q}$ and $\frac{2}{2-q}$, yields
    $$\io \abs{X T_k(u_n)}^q\leq
    \left(\io \frac{\abs{X T_k(u_n)}^2}{\left(\abs{T_k(u_n)}+\varepsilon\right)^{2(1-\gamma)}}\right)^{\frac{q}{2}}
    \left(\io\left(\abs{T_k(u_n)}+\varepsilon\right)^{\frac{2q}{2-q}(1-\gamma)}\right)^{\frac{2-q}{2}}.$$
    Now observe that the first term on the right-hand side, by \eqref{StimaBase-p**} and the fact that $(2\gamma-1)p'=p^{**}$, is bounded by $C\left(\norma{f}{\elleom p}\norma{u_n+\varepsilon}{\elleom{p^{**}}}^\frac{1}{2\gamma-1}\right)^\frac{2}{q}$. On the other hand, the choice 
    $$\frac{2q}{2-q}(1-\gamma)=p^{**}$$
    leads to $q=m^*$ and thus, passing to the limit as $\varepsilon\to0$ and then as $k\to\infty$, we have
    $$\norma{Xu_n}{\elleom {p^*}}\leq C\norma{f}{\elleom p}$$
    for all $n\in\N$. It follows that, being $Xu$ the weak limit of $(Xu_n)_n$ in some Lebesgue space, the same result holds for $Xu$ by lower semicontinuity of the norm.
\end{proof}

We conclude this section with a result analogous to \Cref{Stampacchia-Linfty} and \Cref{Stampacchia-p**-grande}, which can be proved using the same techniques we previously employed.
\begin{theo}\label{Stampacchia_p*divergenza}
    Let $F\in(\elleom p)^m$ with $p\geq 2$. Then the problem
    \begin{equation}
        \begin{dcases}
            X^*\left(A(x) X u\right) = X^*F(x) \quad & \Omega\\
            u=0 \quad & \partial\Omega,
        \end{dcases}
    \end{equation}
    admits a unique weak solution $u\in\SobomX2$ such that
    \begin{itemize}
        \item if $p>Q$, then $u\in\elleom\infty$;
        \item if $2\leq p<Q$, then $u\in\elleom{\frac{Qp}{Q-p}}$.
    \end{itemize}
\end{theo}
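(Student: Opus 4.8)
The plan is to obtain existence and uniqueness from the Lax--Milgram theorem and then to run two Stampacchia-type iterations, one for each summability regime, adapting the test functions used in the proofs of \Cref{Stampacchia-Linfty} and \Cref{Stampacchia-p**-grande} to a right-hand side in divergence form. For the first point, since $\Omega$ is bounded and $p\ge 2$ we have $\abs F\in\elleom2$, so by \Cref{2starinbasso} the map $v\mapsto\io F\cdot Xv$ is a bounded linear functional on $\SobomX2$; the bilinear form $B(u,v)=\io A(x)Xu\cdot Xv$ is continuous and, by $X$-ellipticity, coercive on the Hilbert space $\SobomX2$, so Lax--Milgram yields a unique weak solution $u\in\SobomX2$. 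Because $u\in\SobomX2$, the truncated test functions $G_k(u)$ and $\abs{T_k(u)}^{2\gamma-2}T_k(u)$ (for $\gamma\ge 1$) used below are admissible, being Lipschitz functions of $u$ vanishing at $0$, hence in $\SobomX2$ by the composition lemma for Lipschitz functions.

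For the case $p>Q$ I would test with $v=G_k(u)$ and set $A_k=\{\abs u\ge k\}$. Since $XG_k(u)$ is supported in $A_k$, $X$-ellipticity and the Cauchy--Schwarz inequality give $\norma{XG_k(u)}{\elleom2}\le\alpha^{-1}\left(\int_{A_k}\abs F^2\right)^{1/2}$, and then the Sobolev inequality together with H\"older's inequality yield
$$\left(\io\abs{G_k(u)}^{2^*}\right)^{\frac1{2^*}}\le\frac{\mathcal S_2}{\alpha}\norma F{\elleom p}\,\abs{A_k}^{\frac12-\frac1p}.$$
For $h>k$ one has $A_h\subseteq A_k$ and $\abs{G_k(u)}\ge h-k$ on $A_h$, so $\psi(k):=\abs{A_k}$ satisfies $\psi(h)\le M\,\psi(k)^\delta\,(h-k)^{-2^*}$ with $M=(\mathcal S_2\norma F{\elleom p}/\alpha)^{2^*}$ and $\delta=\frac{2^*}{2}(1-\frac2p)=\frac{Q(p-2)}{p(Q-2)}$. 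A direct computation shows that $\delta>1$ is equivalent to $p>Q$, so \Cref{LemmaStampacchiaLinfty} applies and $\psi$ has a zero $d$, giving $\norma u{\elleom\infty}\le d=C(\Omega,Q,p,\alpha)\norma F{\elleom p}$.

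For the case $2\le p<Q$, if $p=2$ there is nothing to prove, since then $\frac{Qp}{Q-p}=2^*$ and $u\in\SobomX2$; so assume $p>2$ and choose $\gamma=\frac{p^*}{2^*}$, which satisfies $\gamma>1$. Testing with $v=\abs{T_k(u)}^{2\gamma-2}T_k(u)$ and using $\abs{XT_k(u)}^2\abs{T_k(u)}^{2\gamma-2}=\gamma^{-2}\abs{X(\abs{T_k(u)}^\gamma)}^2$, $X$-ellipticity and the Cauchy--Schwarz inequality give
$$\norma{X(\abs{T_k(u)}^\gamma)}{\elleom2}\le\frac{\gamma}{\alpha}\left(\io\abs F^2\abs{T_k(u)}^{2\gamma-2}\right)^{1/2};$$
applying the Sobolev inequality on the left and H\"older's inequality with exponents $\frac p2$ and $\frac p{p-2}$ on the right, and using that the chosen $\gamma$ makes $\gamma2^*=(2\gamma-2)\frac{p}{p-2}$, one arrives at
$$\left(\io\abs{T_k(u)}^{\gamma2^*}\right)^{\frac1{2^*}-\frac{p-2}{2p}}\le C(\Omega,Q,p,\alpha)\norma F{\elleom p}.$$
Since for this $\gamma$ one has $\gamma2^*=\frac{Qp}{Q-p}$ and $\frac1{2^*}-\frac{p-2}{2p}=\frac1p-\frac1Q=\frac{Q-p}{Qp}$, this reads $\norma{T_k(u)}{\elleom{Qp/(Q-p)}}\le C\norma F{\elleom p}$, and Fatou's lemma as $k\to\infty$ finishes the proof.

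The computations above are routine refinements of those already carried out in this section; the delicate points — and hence the main obstacles — are the exponent bookkeeping: verifying that $\delta>1$ is \emph{exactly} the condition $p>Q$ in the first regime, and that the single choice $\gamma=p^*/2^*$ simultaneously makes $\gamma>1$, balances the two H\"older exponents, and produces the claimed summability $\gamma2^*=Qp/(Q-p)$ in the second. One also has to justify, as in the earlier proofs, that the truncation test functions belong to $\SobomX2$, which follows at once from the composition lemma for Lipschitz functions.
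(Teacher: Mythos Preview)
Your proposal is correct and follows exactly the approach the paper indicates: the paper does not give a detailed proof of this theorem but states that it \say{can be proved using the same techniques we previously employed}, namely the test functions $G_k(u)$ and $\abs{T_k(u)}^{2\gamma-2}T_k(u)$ from \Cref{Stampacchia-Linfty} and \Cref{Stampacchia-p**-grande}, which is precisely what you do. Your exponent bookkeeping (in particular $\delta>1\iff p>Q$ and $\gamma=p^*/2^*$ balancing the H\"older exponents to give $\gamma2^*=Qp/(Q-p)$) is correct.
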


\section{Duality solutions}\label{Sect_DualitySol}

Here we discuss a different technique to find solutions to \eqref{Problemabase}. This definition, which relies on the linearity of the problem, is less suited for generalizations to nonlinear equations than the one of solution found by approximation. Nevertheless, when both definitions are available, the two notions of solution coincide, as we will show in \Cref{Duality&Approx}.

Consider the solution $v$ to the adjoint problem
\begin{equation}\label{ProblemaAdj}
    \begin{dcases}
        X^*\left(A^T(x) X v\right) = g(x) \quad & \Omega\\
        v=0 \quad & \partial\Omega,
    \end{dcases}
\end{equation}
where $g\in\elleom\infty$, we have $v\in\elleom\infty$. Thus, choosing $u$ as a test function in \eqref{ProblemaAdj} and $v$ as a test function in \eqref{Problemabase}, we have
\begin{equation}\label{ideaduality}
    \io f(x)v = \io A(x)Xu\cdot Xv = \io Xu\cdot A^T(x)Xv = \io g(x)u.
\end{equation}
Note that the identity \eqref{ideaduality} only requires $u$ to belong to $\elleom1$. Following \cite{StampacchiaElliptic}, we give the following definition.
\begin{dhef}
    $u\in\elleom1$ is a duality solution to \eqref{Problemabase} if, for any $g\in\elleom\infty$, we have
    \begin{equation}\label{DualitySolDef}
        \io g(x)u=\io f(x)v,
    \end{equation}
    where $v\in\elleom\infty$ is the solution to \eqref{ProblemaAdj} with datum $g$.    
\end{dhef}

This definition, introduced by Stampacchia in \cite{StampacchiaElliptic}, has the advantage of not requiring any regularity, in principle, on the solution $u$. This is particularly relevant when $f$ has low summability, as the solution may have infinite energy (as shown by \Cref{BG_gradiente}).

\begin{lemma}
    Let $f\in\elleom1$, then there exists a unique duality $u$ solution to \eqref{Problemabase}. Furthermore, $u\in\elleom q\,\,\forall q<\frac{Q}{Q-2}$.
\end{lemma}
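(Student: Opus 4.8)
The plan is to produce the duality solution directly via the Riesz representation theorem, using the $L^\infty$ bound of \Cref{Stampacchia-Linfty} applied to the adjoint operator. First I would note that $A^T(x)$ satisfies the ellipticity condition \eqref{HPsuA} with the very same constants $\alpha,\beta$ (since $A^T(x)\eta\cdot\eta=A(x)\eta\cdot\eta$), so the adjoint problem \eqref{ProblemaAdj} is again an $X$-elliptic problem. Consequently, for any $g\in\elleom\infty\subset\elleom{2_*}$ there is a unique weak solution $v_g\in\SobomX2$ (by Lax--Milgram), and \Cref{Stampacchia-Linfty} --- valid verbatim for $X^*(A^T(x)X\,\cdot\,)$ --- gives, for each fixed $p>\frac Q2$, a constant $C=C(Q,\Omega,p,\alpha)$ such that $\norma{v_g}{\elleom\infty}\leq C\norma g{\elleom p}$.

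Next, fix $p>\frac Q2$ and consider the map $\Lambda\colon\elleom\infty\to\R$, $\Lambda(g)=\io f v_g$. It is linear (by linearity of \eqref{ProblemaAdj} and uniqueness of $v_g$) and, since $f\in\elleom1$,
$$\abs{\Lambda(g)}\leq\norma f{\elleom1}\norma{v_g}{\elleom\infty}\leq C\norma f{\elleom1}\,\norma g{\elleom p}\qquad\text{for all }g\in\elleom\infty.$$
Because $\elleom\infty$ is dense in $\elleom p$ (here $p<\infty$ and $\Omega$ has finite measure), $\Lambda$ extends uniquely to a bounded linear functional on $\elleom p$, so by the Riesz representation theorem there is a unique $u\in\elleom{p'}$ with $\Lambda(g)=\io u g$ for every $g\in\elleom p$, in particular for every $g\in\elleom\infty$. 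Comparing with \eqref{DualitySolDef}, this $u$ is a duality solution and it belongs to $\elleom{p'}$. Letting $p$ range over $\left(\frac Q2,\infty\right)$ yields $u\in\elleom q$ for all $q<\frac{Q}{Q-2}$ (the case $q=1$ being contained in the definition of duality solution); the functions obtained for different values of $p$ coincide thanks to the uniqueness proved below.

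For uniqueness, suppose $u_1,u_2\in\elleom1$ both satisfy \eqref{DualitySolDef}. Subtracting, $\io(u_1-u_2)g=0$ for every $g\in\elleom\infty$, which forces $u_1=u_2$ a.e.\ in $\Omega$. The only delicate point in the whole argument is the first step --- checking that \Cref{Stampacchia-Linfty} genuinely transfers to the adjoint problem --- after which the rest is the standard Riesz duality scheme. Alternatively, one could verify that the solution $u$ found by approximation in \Cref{Thm:uniqueapprox} already satisfies \eqref{DualitySolDef}: testing \eqref{Problemabase_approx} with $v_g$ and \eqref{ProblemaAdj} with $u_n$ gives $\io f_n v_g=\io g\,u_n$, and passing to the limit as $n\to\infty$ --- using $v_g\in\elleom\infty$, $f_n\to f$ in $\elleom1$, and $u_n\to u$ in $\elleom s$ for $s<\frac{Q}{Q-2}$ --- produces \eqref{DualitySolDef}; this also anticipates the equivalence established later in \Cref{Duality&Approx}.
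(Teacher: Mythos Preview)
Your argument is correct and follows essentially the same route as the paper: apply \Cref{Stampacchia-Linfty} to the adjoint operator, use the resulting bound to show $\Lambda$ is continuous on $\elleom p$ for each $p>\frac Q2$, invoke Riesz representation, and then observe that the representatives for different values of $p$ coincide. Your explicit remark that $A^T$ satisfies \eqref{HPsuA} and your direct uniqueness argument (testing the difference against all $g\in\elleom\infty$) are small additions not spelled out in the paper, but the overall strategy is the same.
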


\begin{proof}
    Let $p>\frac{Q}{2}$ and define the linear functional $\Lambda: L^p(\Omega) \rightarrow \mathbb{R}$ as
    $$
    \Lambda(g)=\int_{\Omega} f v,
    $$
    where $v$ solves \eqref{ProblemaAdj} with datum $g$. 
    By \Cref{Stampacchia-Linfty}, the functional is well-defined and there exists $C>0$ such that
    $$
    |\Lambda(g)| \leq \int_{\Omega}|f||v| \leq\|f\|_{L^1(\Omega)}\|v\|_{L^{\infty}(\Omega)} \leq C\|f\|_{L^1(\Omega)}\|g\|_{L^p(\Omega)}.
    $$
    It follows that $T$ is continuous on $L^p(\Omega)$, which (by the Riesz representation theorem) implies that there exists a unique $u_p\in \elleom{p^{\prime}}$ such that
    $$
    \Lambda(g)=\int_{\Omega} u_p g, \quad \forall g \in L^p(\Omega) .
    $$
    Since $L^{\infty}(\Omega) \subset L^p(\Omega)$, we have
    $$
    \int_{\Omega} u_p g=\Lambda(g)=\int_{\Omega} f v, \quad \forall g \in L^{\infty}(\Omega),
    $$
    so that $u_p$ is a duality solution to \eqref{Problemabase}, as desired. Moreover, $u_p$ does not depend on $p$: to prove this, let $p>q>\frac{Q}{2}$. Then:
    $$
    \int_{\Omega} u_p g=\int_{\Omega} f v=\int_{\Omega} u_q g, \quad \forall g \in L^{\infty}(\Omega),
    $$
    so that $u_p=u_q$. Therefore, there exists a unique function $u$ which is a duality solution to \eqref{Problemabase} and belongs to $L^{p^{\prime}}(\Omega)$ for every $p>\frac{Q}{2}$. It follows that $u$ belongs to $L^q(\Omega)$ for every $q<\frac{Q}{Q-2}$, as desired.
\end{proof}

It turns out that (as in the elliptic case) the duality solution is exactly the solution found by approximation.

\begin{theo}\label{Duality&Approx}
    Let $u$ be the solution to \eqref{Problemabase} found by approximation. Then $u$ is also the duality solution to \eqref{Problemabase}.
\end{theo}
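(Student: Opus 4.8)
The plan is to exploit, at the level of the approximating problems, the very identity \eqref{ideaduality} that motivated the definition of duality solution, and then to pass to the limit. Fix $g\in\elleom\infty$ and let $v$ be the solution to the adjoint problem \eqref{ProblemaAdj} with datum $g$. Since $\Omega$ is bounded, $g\in\elleom p$ for every $p$, in particular for some $p>Q/2$; moreover $A^T$ satisfies \eqref{HPsuA} with the same constants $\alpha,\beta$, because $A^T(x)\eta\cdot\eta=A(x)\eta\cdot\eta$. Hence the adjoint operator is $X$-elliptic, \Cref{Stampacchia-Linfty} applies to it and gives $v\in\elleom\infty$; and since $g\in\elleom{2_*}$, this $v$ is in fact the finite-energy weak solution, $v\in\SobomX2$. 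Likewise, for each $n$ the datum $f_n=T_n(f)$ belongs to $\elleom\infty\subset\elleom{2_*}$, so $u_n$ is the weak solution in $\SobomX2$ to \eqref{Problemabase_approx}.

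Next I would use $v$ as a test function in the weak formulation of \eqref{Problemabase_approx} and $u_n$ as a test function in the weak formulation of \eqref{ProblemaAdj} — both admissible, since $u_n,v\in\SobomX2$ — to obtain
$$\io f_n v=\io A(x)Xu_n\cdot Xv=\io Xu_n\cdot A^T(x)Xv=\io g\,u_n .$$
It then remains to pass to the limit as $n\to\infty$ in $\io f_n v=\io g\,u_n$. On the left-hand side, $f_n\to f$ in $\elleom1$ by dominated convergence and $v\in\elleom\infty$, so $\io f_n v\to\io f v$. On the right-hand side, by \Cref{March_estimates} and \Cref{Thm:uniqueapprox} the whole sequence $u_n$ converges to $u$ strongly in $\elleom s$ for every $s<\frac{Q}{Q-2}$, in particular in $\elleom1$, and $g\in\elleom\infty$, so $\io g\,u_n\to\io g\,u$. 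Therefore $\io g\,u=\io f v$ for every $g\in\elleom\infty$, which is precisely the defining property of the duality solution; by the uniqueness statement in the preceding lemma, $u$ coincides with the duality solution to \eqref{Problemabase}.

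I do not anticipate a genuine obstacle here: the argument is essentially the elliptic one. The only points demanding a little care are (i) checking that the function provided by \Cref{Stampacchia-Linfty} for the adjoint problem is really the finite-energy weak solution, so that it is a legitimate test function in \eqref{Problemabase_approx} — this holds because its datum lies in $\elleom{2_*}$ and the Lax–Milgram solution is unique — and (ii) the convergences $f_n\to f$ in $\elleom1$ and $u_n\to u$ in $\elleom1$, the latter being exactly what \Cref{March_estimates} and \Cref{Thm:uniqueapprox} were set up to give. Everything else is a bookkeeping of which space each object lives in.
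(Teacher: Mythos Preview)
Your proof is correct and follows essentially the same approach as the paper: test the approximating problem against the bounded solution $v$ of the adjoint equation, obtain $\io f_n v=\io g\,u_n$, and pass to the limit using $f_n\to f$ in $\elleom1$ and $u_n\to u$ in $\elleom1$. You in fact spell out more of the bookkeeping (why $v\in\SobomX2\cap\elleom\infty$ is an admissible test function, why $A^T$ is $X$-elliptic) than the paper does.
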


\begin{proof}
    Let $g\in\elleom\infty$ and choose the solution $v$ to \eqref{ProblemaAdj} with datum $g$ as a test function in \eqref{Problemabase_approx}. We have
    $$\io g(x)u_n=\io A(x)Xu_n Xv = \io f_n(x) v,$$
    which, passing to the limit as $n\to\infty$ (recall that the sequence $(u_n)_n$ is strongly convergent in $\elleom1$ to $u$) leads to
    $$\io g(x)u=\io f(x) v,$$
    which concludes the proof.
\end{proof}

We conclude this section by presenting a duality argument, suggested by L. Boccardo \cite{ConsigliLucio}, which improves Theorem \ref{Stampacchia-Linfty}. 
\begin{theo}
    If the datum $f$ in \eqref{Problemabase} belongs to $\elleom{\frac{Q}{2},1}$, then the solution $u$ belongs to $\elleom\infty$ and
    \begin{equation}
        \norma{u}{\elleom\infty}\leq C(Q,\Omega) \norma{f}{\elleom{\frac{Q}{2},1}}.
    \end{equation}
\end{theo}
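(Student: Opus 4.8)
The plan is to run a duality argument, using the adjoint problem \eqref{ProblemaAdj} together with the a priori Marcinkiewicz estimate of \Cref{March_estimates}. First I would note that, since $Q>2$ and $\left|\Omega\right|<\infty$, we have $\elleom{\frac Q2,1}\subset\elleom{\frac Q2}\subset\elleom1$, so the solution $u$ to \eqref{Problemabase} is well defined and, by \Cref{Duality&Approx}, coincides with the duality solution; in particular $u\in\elleom1$. To deduce $u\in\elleom\infty$ with the claimed bound it then suffices to estimate $\left|\io g(x)u\right|$ for every $g\in\elleom\infty$ with $\norma{g}{\elleom1}\leq1$: indeed, a bound $\left|\io g u\right|\leq M$ uniform over all such $g$ forces $\left|u\right|\leq M$ a.e., by testing against $g=\left|E\right|^{-1}\mathrm{sgn}(u)\rchi_E$ on a set $E$ of positive finite measure where $\left|u\right|$ would exceed $M$.

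Next, I would fix such a $g$ and let $v$ be the solution to the adjoint problem \eqref{ProblemaAdj} with datum $g$; since $g\in\elleom\infty\subset\elleom{2_*}$ this is the (unique) weak solution $v\in\SobomX2$, the same one appearing in the definition of duality solution. Because $A^T(x)\eta\cdot\eta=A(x)\eta\cdot\eta$, the matrix $A^T$ satisfies \eqref{HPsuA} with the same constants, so all the preceding results apply to \eqref{ProblemaAdj} as well. Testing \eqref{ProblemaAdj} with $T_k(v)\in\SobomX2$ and repeating verbatim the computation in the proof of \Cref{March_estimates}, I would obtain $v\in\Mar{\frac{Q}{Q-2}}$ with $\norma{v}{\Mar{\frac{Q}{Q-2}}}\leq C\norma{g}{\elleom1}\leq C$. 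On the other hand, the definition of duality solution gives $\io g(x)u=\io f(x)v$. Since $\frac Q2$ is the H\"older conjugate of $\frac{Q}{Q-2}$, the function $f\in\elleom{\frac Q2,1}$ represents, via $w\mapsto\io fw$, an element of $\left(\Mar{\frac{Q}{Q-2}}\right)'$, so \Cref{prop:dualityMar} yields $\left|\io f(x)v\right|\leq C\,\norma{f}{\elleom{\frac Q2,1}}\norma{v}{\Mar{\frac{Q}{Q-2}}}\leq C(Q,\Omega)\,\norma{f}{\elleom{\frac Q2,1}}$. Chaining these inequalities bounds $\left|\io g u\right|$ as required, and taking the supremum over $g$ finishes the proof.

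The argument is essentially soft, so I do not expect a serious obstacle; the points that need care are (i) checking that the truncation computation of \Cref{March_estimates} genuinely applies to the weak solution $v$ of the adjoint problem — it does, since $g\in\elleom\infty\subset\elleom{2_*}$ gives $v\in\SobomX2$, whence $T_k(v)\in\SobomX2$ is an admissible test function — and (ii) the bookkeeping of the constant, which inherits a dependence on $\alpha$ (and on the geometric data through $\mathcal S_2$, $Q$, $\left|\Omega\right|$) from \Cref{March_estimates} and from the pairing constant in \Cref{prop:dualityMar}. Conceptually, the heart of the matter is that it is precisely the Lorentz refinement $\elleom{\frac Q2,1}$ — rather than the plain endpoint $\elleom{\frac Q2}$ — that pairs by duality with the Marcinkiewicz space $\Mar{\frac{Q}{Q-2}}$ in which the adjoint solution lives, so that the scheme closes; for $f\in\elleom{\frac Q2}$ one would only recover weaker, $\mathrm{BMO}$-type information, in agreement with the uniformly elliptic case.
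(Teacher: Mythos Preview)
Your argument is correct and follows the same duality scheme as the paper: test the duality identity $\io gu=\io fv$ against normalized $g$, bound $\norma{v}{\Mar{Q/(Q-2)}}$ via \Cref{March_estimates} applied to the adjoint problem, pair with $f\in\elleom{Q/2,1}$ through \Cref{prop:dualityMar}, and take the supremum. If anything, you are slightly more careful than the paper in restricting to $g\in\elleom\infty$ (so that the adjoint weak solution and the definition of duality solution apply directly) and in noting the implicit dependence of the constant on $\alpha$.
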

\begin{proof}
    Let $g\in\elleom1$ and consider the solution $v$ of the adjoint problem \eqref{ProblemaAdj}. By \Cref{prop:dualityMar} and \Cref{March_estimates}, we have
    \begin{equation}
        \begin{split}
            \abs{\io g(x)u} = \abs{\io f(x)v} 
    &\leq C(Q,\Omega) \norma{v}{\Mar{\frac{Q}{Q-2}}}\norma{f}{\elleom{\frac{Q}{2},1}}
    \\&\leq C(Q,\Omega) \norma{g}{\elleom1}^\frac{Q}{Q-2}\norma{f}{\elleom{\frac{Q}{2},1}}.
        \end{split}
    \end{equation}    
    Taking the supremum over all $g\in\elleom1$ with $\norma{g}{\elleom1}=1$ leads to
    $$\norma{u}{\elleom\infty}\leq C(Q,\Omega) \norma{f}{\elleom{\frac{Q}{2},1}}.$$
\end{proof}

\section{Measure data}\label{Sect_Measuredata}

The definition of duality solutions and H\"older regularity results solutions to uniformly elliptic equations with highly summable data (De Giorgi - Nash - Moser theory) serve as a motivation to consider, by duality, elliptic equations with measure data. 
Such problems may also be considered natural from an approximation perspective, in the sense that the limit points of bounded sequences in $\elleom1$ should be sought in the space of Radon measures on $\Omega$.

In the $X$-elliptic case, the first H\"older regularity result was proved by Franchi and Lanconelli in \cite{FL-Holder}. More recently, Sawyer and Wheeden \cite{Subelliptic-DG} provided a H\"older regularity theorem under rather relaxed (although very close to necessary) conditions, of which our assumptions are a particular case.
More precisely, \cite[Theorem 7]{Subelliptic-DG} implies the following result.

\begin{theo}\label{Theo_HolderReg}
    Let $u$ be the unique solution to
    \begin{equation}
        \begin{dcases}
            Lu = f(x) + X^* F(x) \quad & \Omega\\
            u=0 \quad & \partial\Omega
        \end{dcases}
    \end{equation}
    with $f\in\elleom p$ with $p>\frac{Q}{2}$ and $F\in\left(\elleom q\right)^m$ with $q>Q$. Then for every compact subset $K\subset\Omega$ there exists $\alpha=\alpha(K)>0$ and $C=C(K,\norma{f}{\elleom p},\norma{F}{\elleom q},\norma{u}{\elleom2})$ such that
    $$\norma{u}{C^\alpha(K)}\leq C.$$
\end{theo}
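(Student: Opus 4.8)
The plan is to derive \Cref{Theo_HolderReg} directly from \cite[Theorem 7]{Subelliptic-DG}, so the work is essentially one of verification: checking that the structural hypotheses of Sawyer--Wheeden's theorem are met in our setting, and that their conclusion specializes to the stated local H\"older bound. First I would recall the existence and uniqueness of the solution $u\in\SobomX2$: since $f\in\elleom p$ with $p>\tfrac Q2>2_*$ and $F\in(\elleom q)^m$ with $q>Q\geq 2$, both $f$ and $X^*F$ lie in $(\SobomX2)'$ by \Cref{2starinbasso}, so the weak solution is produced by Lax--Milgram, and it coincides with the solution found by approximation by \Cref{Duality&Approx}. In particular $u\in\elleom\infty$ by \Cref{Stampacchia-Linfty} and \Cref{Stampacchia_p*divergenza}, which is already the global part of the bound; the point of the theorem is the quantitative local H\"older modulus.

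Next I would translate the equation $Lu=f+X^*F$ into the divergence form $-\operatorname{div}(\tilde A(x)\nabla u)=f+X^*F$ via \eqref{L_e_divergenza}, and verify that $\tilde A=C^TAC$ satisfies the degenerate ellipticity condition required by \cite{Subelliptic-DG}, namely that there is a nonnegative weight (here comparable to $\sum_j |c_j(x)\cdot\xi|^2$, i.e. the quadratic form built from the vector fields $X$) controlling $\tilde A$ from both sides; this is exactly \eqref{X-elliptic-pointwise}. One then checks the auxiliary hypotheses of Sawyer--Wheeden: that the underlying metric measure structure $(\Omega,d_X,\mathcal L^N)$ is doubling (our \eqref{QDoubling}) and supports the relevant Poincar\'e inequality (our third bullet in Section~\ref{Sect_Geometric assumptions}), and that the lower-order data $f,F$ lie in the correct Lebesgue/Morrey-type classes relative to the homogeneous dimension $Q$: the thresholds $p>\tfrac Q2$ for $f$ and $q>Q$ for $F$ are precisely the subcritical conditions ensuring the De Giorgi--Nash--Moser iteration of \cite{Subelliptic-DG} closes and yields local H\"older continuity. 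With all hypotheses in place, \cite[Theorem 7]{Subelliptic-DG} gives, for each metric ball $B$ with $2B\Subset\Omega$, an oscillation estimate $\operatorname{osc}_B u\leq C r^{\alpha}\big(\|u\|_{\elleom\infty}+\|f\|_{\elleom p}+\|F\|_{\elleom q}\big)$ with $\alpha>0$ and $C$ depending only on the structural constants and the ratio $\operatorname{dist}(B,\partial\Omega)$; a standard covering argument over a compact $K\subset\Omega$ (finitely many such balls, plus the already-known $\elleom\infty$ bound) then converts this into the claimed $\|u\|_{C^\alpha(K)}\leq C$ with $\alpha=\alpha(K)$ and $C=C(K,\|f\|_{\elleom p},\|F\|_{\elleom q},\|u\|_{\elleom2})$ — here $\|u\|_{\elleom2}$ enters because, via \eqref{X-elliptic} and the energy identity, it controls $\|Xu\|_{\elleom2}$ and hence the normalization in the iteration, and in turn it can itself be bounded by the data, though we need not make that explicit.

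The main obstacle is not any single computation but the bookkeeping of matching conventions: \cite{Subelliptic-DG} works with an abstract pair of weights and a general subelliptic structure, and one must be careful that (i) the quadratic form $\sum_j|c_j(x)\cdot\xi|^2$ associated to our $X$ is exactly the degenerate form their structural conditions refer to, (ii) their notion of weak solution (test functions in a degenerate Sobolev space adapted to $\tilde A$) agrees with our $\SobomX2$ — which is true because \eqref{X-elliptic} shows the two energy norms are equivalent — and (iii) their Sobolev/Poincar\'e hypotheses are implied by, rather than merely analogous to, the assumptions collected in Section~\ref{Sect_Geometric assumptions}. Once these identifications are made, the proof is a direct citation plus a routine covering argument, so I would keep the exposition brief and refer the reader to \cite{Subelliptic-DG} for the iteration itself.
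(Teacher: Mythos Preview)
Your proposal is correct and follows the same approach as the paper: the paper does not give an independent proof of \Cref{Theo_HolderReg} but simply states that it is implied by \cite[Theorem 7]{Subelliptic-DG}, which is exactly the citation-plus-verification strategy you outline. The one point the paper makes explicit that you omit is the construction of the \emph{accumulating sequence of cutoff functions} required among the hypotheses of \cite[Theorem 7]{Subelliptic-DG}; the paper handles this in the remark immediately following the statement, choosing radii $r_j=r/j$ and invoking \cite[Theorem 10]{KL-Liouville} to build cutoffs $\psi_j$ with $\rchi_{B_{r_{j+1}}}\le\psi_j\le\rchi_{B_{r_j}}$, so you should add this to your list of structural checks.
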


\begin{remark}
    The requirement for the existence of an accumulating sequence of cutoff functions (in Theorem 7 of \cite{Subelliptic-DG}) can be satisfied choosing a sequence of radii $r_j=\frac{1}{j}r$ and using Theorem 10 of \cite{KL-Liouville} to construct a cutoff function $\psi_j$ such that
    $$\rchi_{B_{r_{j+1}}(x)}\leq \psi_j\leq\rchi_{B_{r_{j}}(x)}.$$
\end{remark}

\Cref{Theo_HolderReg} allows us to extend the previous existence and uniqueness results for duality solutions to any measure datum $\mu\in\mathcal M(\Omega)$, obtaining the following result.

\begin{theo}\label{Stampacchia_measuredata}
    For every $\mu\in\mathcal M (\Omega)$, there exists a unique duality solution $u$ of 
\begin{equation}\label{Problema_measure}
    \begin{dcases}
        X^*\left(A(x) X u\right) = \mu \quad & \Omega\\
        u=0 \quad & \partial\Omega.
    \end{dcases}
\end{equation}
Such solution is the one found by approximation, belongs to $\Mar{\frac{Q}{Q-2}}$ and its $X$-gradient belongs to $\Mar{\frac{Q}{Q-1}}$. Moreover, the following estimates hold:
    \begin{gather}
        \norma{u}{\Mar{\frac{Q}{Q-2}}}\leq C(Q,\Omega,\alpha)\norma{\mu}{\mathcal M(\Omega)}\\ 
        \norma{X u}{\Mar{\frac{Q}{Q-1}}}\leq C(Q,\Omega,\alpha)\norma{\mu}{\mathcal M(\Omega)}.
    \end{gather}
\end{theo}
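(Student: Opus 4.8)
The strategy is to combine the duality-solution machinery of Section~\ref{Sect_DualitySol} with the \emph{a priori} estimates of \Cref{March_estimates}, using \Cref{Theo_HolderReg} to guarantee that the adjoint problem has the regularity needed to define the duality pairing against a measure. Recall that for $f\in\elleom1$ the duality solution was constructed by testing against solutions $v$ of the adjoint problem \eqref{ProblemaAdj} with $g\in\elleom\infty$; the only place the datum entered was through the pairing $\io f v$, which made sense because $f\in\elleom1$ and $v\in\elleom\infty$. When $f$ is replaced by a measure $\mu\in\mathcal M(\Omega)$, the pairing $\io v\,d\mu$ still makes sense provided $v$ is (say) continuous on $\overline\Omega$ — and this is exactly what \Cref{Theo_HolderReg} provides, since for $g\in\elleom\infty\subset\elleom p$ with $p>\tfrac{Q}{2}$ and $F=0$ the adjoint solution $v$ is locally H\"older continuous, hence bounded.

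\textbf{Key steps, in order.} First I would fix $p>\tfrac{Q}{2}$ and define $\Lambda\colon\elleom p\to\R$ by $\Lambda(g)=\io v\,d\mu$, where $v$ solves \eqref{ProblemaAdj} with datum $g$ (note the adjoint matrix $A^T$ satisfies the same bounds \eqref{HPsuA}, so all of Sections~\ref{Sect_existence}--\ref{Sect_DualitySol} apply verbatim to \eqref{ProblemaAdj}). By \Cref{Stampacchia-Linfty} applied to the adjoint equation, $\norma{v}{\elleom\infty}\leq C\norma{g}{\elleom p}$, whence $\abs{\Lambda(g)}\leq\norma{v}{\elleom\infty}\norma{\mu}{\mathcal M(\Omega)}\leq C\norma{\mu}{\mathcal M(\Omega)}\norma{g}{\elleom p}$; so $\Lambda$ is a bounded functional and by Riesz representation there is a unique $u_p\in\elleom{p'}$ with $\Lambda(g)=\io u_p g$. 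Exactly as in the $\elleom1$ case, $u_p$ is independent of $p$ (comparing two exponents on the common test space $\elleom\infty$), giving a single function $u$ which is, by definition, a duality solution to \eqref{Problema_measure} and belongs to $\elleom q$ for every $q<\tfrac{Q}{Q-2}$. Uniqueness of the duality solution is immediate: if $u_1,u_2$ are two such solutions then $\io(u_1-u_2)g=0$ for all $g\in\elleom\infty$, hence $u_1=u_2$.

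\textbf{Identifying the approximate solution and the Marcinkiewicz estimates.} Next I would show this $u$ coincides with the solution found by approximation described in \Cref{Remark:measurebyapprox}. Pick $(f_n)_n\subset\elleom1$ converging to $\mu$ weakly-$*$ with $\norma{f_n}{\elleom1}\to\norma{\mu}{\mathcal M(\Omega)}$ (indeed one may take $\norma{f_n}{\elleom1}=\norma{\mu}{\mathcal M(\Omega)}$ for all $n$), and let $u_n$ solve \eqref{Problemabase_approx} with datum $f_n$. Testing \eqref{Problemabase_approx} against the adjoint solution $v$ with datum $g\in\elleom\infty$ gives $\io g u_n=\io f_n v\to\io v\,d\mu=\io g u$, using that $v\in\elleom\infty$ (so it is an admissible test function against the weak-$*$ convergence) — here one should note $v$ is moreover continuous on compacta by \Cref{Theo_HolderReg}, which is what legitimizes $\io f_n v\to\int v\,d\mu$; a small approximation/cutoff argument near $\partial\Omega$ handles the boundary layer, using $\norma{v}{\elleom\infty}$ and the fact that $u_n\rightharpoonup u$ in every $\elleom s$, $s<\tfrac{Q}{Q-2}$, by \Cref{March_estimates}. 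Thus $u$ is the approximate solution, and then the Marcinkiewicz bounds follow directly from \Cref{March_estimates} (applied with $\norma{f_n}{\elleom1}=\norma{\mu}{\mathcal M(\Omega)}$) together with lower semicontinuity of the Marcinkiewicz norms under a.e. convergence: $\norma{u}{\Mar{Q/(Q-2)}}\leq\liminf_n\norma{u_n}{\Mar{Q/(Q-2)}}\leq C\norma{\mu}{\mathcal M(\Omega)}$, and likewise for $Xu$, once one checks $Xu_n\to Xu$ in the appropriate weak/a.e. sense (which was already observed right after \Cref{March_estimates}).

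\textbf{Main obstacle.} The delicate point is not the functional-analytic skeleton but the justification that the pairing $\io f_n v\to\int_\Omega v\,d\mu$ holds when $v$ is only locally H\"older and vanishes on $\partial\Omega$ in the $\SobomX2$-sense rather than continuously up to the boundary; \Cref{Theo_HolderReg} gives interior regularity with constants blowing up near $\partial\Omega$, so one genuinely needs an argument — e.g. truncating $\mu$ to a compact subset $K_\delta\Subset\Omega$, controlling the tail $\abs{\mu}(\Omega\setminus K_\delta)$, and on $\Omega\setminus K_\delta$ using instead the uniform bound $\norma{v}{\elleom\infty}\leq C\norma{g}{\elleom\infty}$ together with absolute continuity of $\abs{\mu}$ on the thin strip (or rather, since $\mu$ need not be absolutely continuous, a direct weak-$*$ estimate $\abs{\int_{\Omega\setminus K_\delta}v\,d(\mu-f_n)}\leq 2\norma{v}{\elleom\infty}\,\abs{\mu}(\overline{\Omega\setminus K_\delta})$ plus choosing $K_\delta$ so this tail is small). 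Handling this boundary layer carefully is where most of the real work lies; everything else is a transcription of Section~\ref{Sect_DualitySol} with $\elleom1$ replaced by $\mathcal M(\Omega)$.
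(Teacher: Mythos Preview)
The paper does not give a proof of this theorem: the statement is preceded only by the sentence ``\Cref{Theo_HolderReg} allows us to extend the previous existence and uniqueness results for duality solutions to any measure datum $\mu\in\mathcal M(\Omega)$, obtaining the following result,'' and no argument follows. Your proposal is exactly a detailed realization of that sentence---you run the Stampacchia duality construction of Section~\ref{Sect_DualitySol} with $\int f v$ replaced by $\int v\,d\mu$, invoke \Cref{Stampacchia-Linfty} for the continuity of $\Lambda$, and then pull the Marcinkiewicz bounds from \Cref{March_estimates} via approximation---so the approaches coincide; the boundary-layer issue you single out (interior-only H\"older regularity from \Cref{Theo_HolderReg} versus the need to pair $v$ with $\mu$ and pass to the limit in $\int f_n v$) is genuine and is simply not addressed in the paper.
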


We also point out that the uniqueness of the solution $u$ can be proved directly (working by approximation, see also Remark \ref{Remark:measurebyapprox}), without employing the notion of duality solution, as we show in the next result.

\begin{lemma}
    Given a measure datum $\mu\in\mathcal{M}(\Omega)$, the solution found by approximation $u$ does not depend on the approximating sequence $(f_n)_n$.
\end{lemma}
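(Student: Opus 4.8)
The plan is to run the duality argument already used to prove \Cref{Duality&Approx}. The estimate $\norma{u_n-v_n}{\Mar{\frac{Q}{Q-2}}}\le C\norma{f_n-g_n}{\elleom1}$ that powered \Cref{Thm:uniqueapprox} is of no help here, since two approximating sequences of $\mu$ differ by a sequence that converges to $0$ only weakly-$*$ as measures, not in $\elleom1$. Instead, let $(f_n)_n$ and $(g_n)_n$ be two sequences converging to $\mu$ in the weak-$*$ topology of measures, normalized as in \Cref{Remark:measurebyapprox} so that $\norma{f_n}{\elleom1}=\norma{g_n}{\elleom1}=\norma{\mu}{\mathcal M(\Omega)}$ for all $n$, and let $u_n,v_n$ be the corresponding solutions to \eqref{Problemabase_approx}. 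By \Cref{March_estimates}, both sequences are bounded in $\Mar{\frac{Q}{Q-2}}$, so up to subsequences $u_n\to u$ and $v_n\to v$ strongly in $\elleom1$; the goal is to show $u=v$.

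Fix $\varphi\in C^\infty_c(\Omega)$ and let $w$ be the weak solution of the adjoint problem \eqref{ProblemaAdj} with datum $\varphi$. Since $A^T$ satisfies the same ellipticity bounds \eqref{HPsuA} and $\varphi\in\elleom p$ for every $p$, \Cref{Stampacchia-Linfty} gives $w\in\elleom\infty\cap\SobomX2$, and \Cref{Theo_HolderReg} (applied to $X^*(A^T(x)X\cdot)$, with $F\equiv 0$) gives $w\in C(\Omega)$. Using $w$ as a test function in \eqref{Problemabase_approx} and $u_n$ (respectively $v_n$) as a test function in \eqref{ProblemaAdj} produces the identities
$$\io\varphi\,u_n=\io A^T(x)Xw\cdot Xu_n=\io A(x)Xu_n\cdot Xw=\io f_n\,w,\qquad \io\varphi\,v_n=\io g_n\,w.$$
On the left-hand sides, strong $\elleom1$ convergence of $u_n,v_n$ together with $\varphi\in\elleom\infty$ gives $\io\varphi u_n\to\io\varphi u$ and $\io\varphi v_n\to\io\varphi v$. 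On the right-hand sides I would show that both $\io f_n w$ and $\io g_n w$ converge to $\io w\,\mathrm d\mu$: writing $w=w\eta+w(1-\eta)$ with $\eta\in C_c(\Omega)$, $0\le\eta\le1$ and $\eta\equiv1$ on a large compact set, the part $w\eta\in C_c(\Omega)$ is handled directly by weak-$*$ convergence, while $|\io f_n\,w(1-\eta)|\le\norma w{\elleom\infty}\int_{\{\eta<1\}}|f_n|$ is small uniformly in $n$ thanks to the tightness of $(f_n)_n$ (and likewise for $g_n$). Passing to the limit thus yields $\io\varphi u=\io w\,\mathrm d\mu=\io\varphi v$ for every $\varphi\in C^\infty_c(\Omega)$, hence $u=v$ almost everywhere; since the extracted subsequences were arbitrary, the whole sequence $(u_n)_n$ converges and its limit does not depend on $(f_n)_n$.

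The delicate point is exactly the convergence $\io f_n w\to\io w\,\mathrm d\mu$: \Cref{Theo_HolderReg} only provides continuity of the adjoint solution $w$ in the interior of $\Omega$, not up to $\partial\Omega$, so without further information the mass of $f_n$ might concentrate near the boundary, where $w$ need not vanish. This is precisely why one insists on approximating sequences with total mass equal to that of $\mu$: combined with weak-$*$ convergence this rules out such concentration and makes the identification of the limit legitimate. (If one is willing to assume enough regularity of $\partial\Omega$ for the adjoint solution to belong to $C_0(\Omega)$, the splitting with $\eta$ is unnecessary and the passage to the limit is immediate.)
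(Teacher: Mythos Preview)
Your argument is essentially correct, but it follows a genuinely different route from the paper's.

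The paper does \emph{not} fix a smooth test datum and solve the adjoint problem once. Instead it introduces, for each $n$, the solution $\psi_n$ of the auxiliary problem
\[
L^*\psi_n \;=\; X^*\!\Bigl(\tfrac{X(u_n-v_n)}{|X(u_n-v_n)|}\Bigr),\qquad \psi_n=0\ \text{on }\partial\Omega,
\]
whose right-hand side is in divergence form with an $L^\infty$ field. Testing this equation with $u_n-v_n$, and testing $L(u_n-v_n)=f_n-g_n$ with $\psi_n$, yields the exact identity
\[
\int_\Omega |X(u_n-v_n)| \;=\; \int_\Omega (f_n-g_n)\,\psi_n .
\]
The H\"older regularity theorem (\Cref{Theo_HolderReg}) applied to the divergence-form data gives that $(\psi_n)_n$ is uniformly bounded and equi-H\"older, hence (up to subsequences) uniformly convergent; since $f_n-g_n\stackrel{*}{\rightharpoonup}0$, the right-hand side tends to $0$, and one concludes $\|X(u_n-v_n)\|_{\elleom1}\to 0$.

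The comparison is this: the paper's choice of an $n$-dependent auxiliary problem produces a \emph{quantitative} statement---strong $L^1$ convergence of the $X$-gradients of $u_n-v_n$ to zero---whereas your fixed-$w$ duality argument only identifies the two limits $u$ and $v$ a posteriori. Your approach is exactly the duality-solution mechanism (both limits satisfy $\io\varphi\,u=\io w\,\mathrm d\mu$, hence coincide with the unique duality solution of \Cref{Stampacchia_measuredata}); the paper inserts this lemma precisely to show that uniqueness can be obtained \emph{without} passing through the duality-solution framework, so in spirit your proof is the one the lemma is meant to complement rather than replace. On the other hand, your route is shorter, avoids the Arzel\`a--Ascoli step, and makes the role of the normalisation $\|f_n\|_{\elleom1}=\|\mu\|_{\mathcal M(\Omega)}$ completely transparent: as you correctly note, it is exactly what forces tightness of $(|f_n|)_n$ and legitimises the limit $\io f_n\,w\to\io w\,\mathrm d\mu$ for a merely interior-continuous bounded $w$. (The same boundary issue is implicitly present in the paper's passage to the limit in $\io(f_n-g_n)\psi_n$, since \Cref{Theo_HolderReg} is only local.)
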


\begin{proof}
    Let $(f_n)_n$ and $(g_n)_n$ be sequences in $\elleom\infty$ such that both $f_n$ and $g_n$ converge to $\mu$ in the weak-* topology of measures, let $u_n$ and $v_n$ be the corresponding solutions to 
    \begin{equation}
        \begin{dcases}
            L\psi_n= X^* \left(\frac{X(u_n-v_n)}{\abs{X(u_n-v_n)}}\right)\quad &\Omega\\
            \psi_n=0\quad & \partial\Omega.
        \end{dcases}
    \end{equation}
    and let $u$, $v$ be their limit points as $n\to\infty$. Consider the solution $\psi_n$ of the problem
    \begin{equation}\label{Dualità_strana_UMI}
        \begin{dcases}
            L\psi_n= X^* \left(\frac{X(u_n-v_n)}{\abs{X(u_n-v_n)}}\right)\quad &\Omega\\
            \psi_n=0\quad & \partial\Omega.
        \end{dcases}
    \end{equation}
    Choosing $u_n-v_n$ as a test function in \eqref{Dualità_strana_UMI} leads to
        \begin{equation}\label{proof:approxunique1}
    \begin{split}
        \io A(x)X(u_n-v_n)\cdot X\psi_n &= \langle X^* \left(\frac{X(u_n-v_n)}{\abs{X(u_n-v_n)}}\right),u_n-v_n\rangle\\&= \io \frac{\abs{X(u_n-v_n)}^2}{\abs{X(u_n-v_n)}}=\io \abs{X(u_n-v_n)}.
    \end{split}
    \end{equation}
    At the same time, the left-hand side satisfies (by linearity of \eqref{Problemabase_approx})
    \begin{equation}\label{proof:approxunique2}
        \io A(x)X(u_n-v_n)\cdot X\psi_n=\io (f_n-g_n)\psi_n.
    \end{equation}
    Putting \eqref{proof:approxunique1} and \eqref{proof:approxunique2} together, we get
    \[
    \io \abs{X(u_n-v_n)}=\io (f_n-g_n)\psi_n.
    \]
    By \Cref{Theo_HolderReg}, the sequence $(\psi_n)_n$ is uniformly bounded and uniformly $\alpha$-H\"older continuous for some $\alpha>0$, which implies that it converges (up to subsequences) uniformly in $\Omega$. On the other hand, we know that $f_n-g_n\stackrel{\ast}{\rightharpoonup}0$. It follows that
    $$\io \abs{X(u_n-v_n)}\to 0\text{ as } n\to\infty,$$
    which concludes the proof.
\end{proof}

\section{Stability results}

The linearity of Equation \eqref{Problemabase}, along with with Theorems \ref{Stampacchia-Linfty}, \ref{Stampacchia-p**-grande}, \ref{Stampacchia-p**-piccolo}, \ref{March_estimates}, \ref{BG_gradiente}, \ref{Stampacchia_p*divergenza}, \ref{Stampacchia_measuredata} and Lax Milgram's Theorem, allows us to prove the following stability results.

\begin{cor}\label{Stability_Dati_fg}
    Let $f,g\in\elleom p$ with $p\geq1$ and let $u,v$ be the corresponding solutions to \eqref{Problemabase}. Then there exists a constant $C=C(Q, \Omega, p, \alpha)$ such that
    \begin{itemize}
        \item If $p>\frac{Q}{2}$, then $$\norma{u-v}{\elleom{\infty}}\leq C\norma{f-g}{\elleom p}$$ and $$\norma{u-v}{\SobomX2}\leq C\norma{f-g}{\elleom p};$$
        \item If $\frac{2Q}{Q+2}\leq p <\frac{Q}{2}$, then $$\norma{u-v}{\elleom{p^{**}}}\leq C\norma{f-g}{\elleom p}$$ and $$\norma{u-v}{\SobomX2}\leq C\norma{f-g}{\elleom p};$$
        \item If $1<p<\frac{2Q}{Q+2}$, then $$\norma{u-v}{\SobomX{p^{*}}}\leq C\norma{f-g}{\elleom p};$$
        \item If $p=1$, then $$\norma{u-v}{\Mar{1^{**}}}\leq C\norma{f-g}{\elleom 1}$$ and $$\norma{Xu-Xv}{\Mar{1^{*}}}\leq C\norma{f-g}{\elleom 1}.$$
    \end{itemize}
\end{cor}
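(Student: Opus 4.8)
The plan is to exploit the linearity of \eqref{Problemabase} so that the statement follows directly from the regularity theorems already established. Write $h:=f-g\in\elleom p$ and $w:=u-v$. The first step is to identify $w$ as the solution, in the sense appropriate to the summability of $h$, of
\[
\begin{dcases}
X^*(A(x)Xw)=h(x) & \Omega,\\
w=0 & \partial\Omega.
\end{dcases}
\]
When $p\ge 2_*=\frac{2Q}{Q+2}$, both $u$ and $v$ are weak solutions in $\SobomX2$; since $\Omega$ is bounded, $h\in\elleom p\subset\elleom{2_*}\subset(\SobomX2)'$ by \Cref{2starinbasso}, and subtracting the two weak formulations shows that $w$ is the weak solution with datum $h$. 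When $1\le p<2_*$, $u$ and $v$ are solutions found by approximation, and, by linearity of \eqref{Problemabase_approx}, the solutions $w_n$ of \eqref{Problemabase_approx} with data $T_n(h)$ satisfy $w_n\to w$ in $\elleom s$ for $s<\frac{Q}{Q-2}$, with $Xw_n\rightharpoonup Xw$ weakly in $\elleom q$ for $q<\frac{Q}{Q-1}$; here one uses \Cref{Thm:uniqueapprox} to see that this limit is independent of the approximating sequence and coincides with $u-v$.

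With this identification in hand, the four cases follow by applying the relevant result to $w$ with datum $h$. Whenever $h\in\elleom{2_*}$ — which holds for $p\ge 2_*$, and also for $p>\frac Q2$ because $Q>2$ forces $\frac Q2>2_*$ — testing the weak formulation with $w$ and using $X$-ellipticity, the Sobolev inequality and H\"older's inequality on the bounded set $\Omega$ gives $\alpha\norma{Xw}{\elleom2}^2\le\io hw\le\mathcal{S}_2\norma h{\elleom{2_*}}\norma{Xw}{\elleom2}$, whence $\norma w{\SobomX2}\le C\norma h{\elleom p}$; this is the $\SobomX2$ estimate in the ranges $p>\frac Q2$ and $2_*\le p<\frac Q2$. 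The remaining inequality for $p>\frac Q2$ is \Cref{Stampacchia-Linfty} applied to $w$, and the one for $2_*\le p<\frac Q2$ is \Cref{Stampacchia-p**-grande} applied to $w$. For $1<p<2_*$, the quantitative bound $\norma{Xw_n}{\elleom{p^*}}\le C\norma{T_n(h)}{\elleom p}\le C\norma h{\elleom p}$ obtained inside the proof of \Cref{BG_gradiente}, applied also to the differences $w_n-w_m$ as in \Cref{Thm:uniqueapprox}, shows that $(w_n)_n$ is Cauchy — hence convergent to $w$ — in $\SobomX{p^*}$, and passing to the limit gives $\norma w{\SobomX{p^*}}\le C\norma h{\elleom p}$. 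For $p=1$, \Cref{March_estimates} applied to $(w_n)_n$ gives $\norma{w_n}{\Mar{1^{**}}}\le C\norma{T_n(h)}{\elleom1}\le C\norma h{\elleom1}$ and $\norma{Xw_n}{\Mar{1^{*}}}\le C\norma h{\elleom1}$ with $1^{**}=\frac Q{Q-2}$, $1^{*}=\frac Q{Q-1}$; applying it again to $w_n-w_m$ shows $(w_n)_n$ and $(Xw_n)_n$ are Cauchy in $\Mar{1^{**}}$ and $\Mar{1^{*}}$, so they converge there to $w$ and $Xw$, and the estimates pass to the limit.

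I expect the only point requiring real care to be the identification of $w$ in the first paragraph when $p<2_*$, namely that the solution found by approximation is compatible with linearity: that $\lim_n w_n$ is indeed the solution found by approximation associated with $h$, independently of the chosen sequence. This is precisely the Cauchy-in-$\Mar{\frac{Q}{Q-2}}$ reasoning already carried out in the proof of \Cref{Thm:uniqueapprox}, so no new idea is needed; the rest amounts to bookkeeping of Sobolev exponents and invocation of the theorems listed before the statement.
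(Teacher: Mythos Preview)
Your proposal is correct and follows the same approach as the paper: observe by linearity that $w=u-v$ solves \eqref{Problemabase} with datum $h=f-g$, then invoke the appropriate regularity theorem (\Cref{Stampacchia-Linfty}, \Cref{Stampacchia-p**-grande}, \Cref{BG_gradiente}, \Cref{March_estimates}, and the Lax--Milgram energy estimate) in each summability range. The paper's own proof is a two-line version of what you wrote; your additional care in verifying that $w$ really is the solution found by approximation with datum $h$ when $p<2_*$ (via \Cref{Thm:uniqueapprox} and the independence from the approximating sequence, since $T_n(f)-T_n(g)\neq T_n(h)$ in general) and your Cauchy-sequence arguments in $\SobomX{p^*}$ and the Marcinkiewicz spaces are legitimate elaborations of what the paper leaves implicit.
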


\begin{cor}
    Let $\mu,\nu$ belong to $\mathcal{M}(\Omega)$ and let $u,v$ be the corresponding solutions to \eqref{Problemabase}. Then there exists a constant $C=C(Q, \Omega, \alpha)$ such that 
    $$\norma{u-v}{\Mar{1^{**}}}\leq C\norma{\mu-\nu}{\mathcal{M}(\Omega)}$$ 
    and 
    $$\norma{Xu-Xv}{\Mar{1^{*}}}\leq C\norma{\mu-\nu}{\mathcal{M}(\Omega)}.$$
\end{cor}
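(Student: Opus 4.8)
The plan is to reduce the statement to \Cref{Stampacchia_measuredata} by exploiting the linearity of the operator $L=X^{*}(A(x)X\,\cdot\,)$. Write $\mu,\nu\in\mathcal M(\Omega)$ for the two data and $u,v$ for the associated (duality, equivalently found-by-approximation) solutions. The first step is to check that $u-v$ is itself the solution to \eqref{Problema_measure} with datum $\mu-\nu$. At the level of solutions found by approximation this is done as follows: pick sequences $(f_n)_n,(g_n)_n\subset\elleom\infty$ converging weakly-$*$ to $\mu$ and $\nu$ respectively, with total masses equal to those of $\mu$ and $\nu$ (see \Cref{Remark:measurebyapprox}); then $(f_n-g_n)_n$ converges weakly-$*$ to $\mu-\nu$, and by linearity of \eqref{Problemabase_approx} the difference $u_n-v_n$ of the corresponding approximate solutions solves \eqref{Problemabase_approx} with datum $f_n-g_n$. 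Passing to the limit and invoking the uniqueness part of \Cref{Stampacchia_measuredata} (the solution does not depend on the chosen approximating sequence), we conclude that $u-v$ is exactly the solution attached to $\mu-\nu$.

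The second step is simply to apply the two estimates of \Cref{Stampacchia_measuredata} to the datum $\mu-\nu$. Recalling that, with the conventions $p^{**}=\frac{pQ}{Q-2p}$ and $p^{*}=\frac{pQ}{Q-p}$, one has $1^{**}=\frac{Q}{Q-2}$ and $1^{*}=\frac{Q}{Q-1}$, and using that $X(u-v)=Xu-Xv$ by linearity of the $X$-gradient, we obtain
$$\norma{u-v}{\Mar{1^{**}}}\leq C(Q,\Omega,\alpha)\norma{\mu-\nu}{\mathcal M(\Omega)}$$
and
$$\norma{Xu-Xv}{\Mar{1^{*}}}\leq C(Q,\Omega,\alpha)\norma{\mu-\nu}{\mathcal M(\Omega)},$$
with $\norma{\mu-\nu}{\mathcal M(\Omega)}$ the total variation norm, which is precisely the quantity appearing in \Cref{Stampacchia_measuredata}. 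This is the claimed pair of inequalities, and $C$ depends only on $Q,\Omega,\alpha$, as required.

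There is essentially no analytical obstacle: the corollary is a direct consequence of the previously established measure-data estimates together with linearity. The only point that deserves care, and the only thing I would write out in detail, is the legitimacy of the identification \say{difference of solutions $=$ solution of the difference} in the measure-datum setting, where the solution is not characterized variationally; this is handled by the approximation argument above, combined with the independence of the limit from the approximating sequence recalled just before the corollary. Once that is granted, the estimates follow at once.
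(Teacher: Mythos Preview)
Your proposal is correct and follows the same approach as the paper: observe by linearity that $u-v$ solves \eqref{Problema_measure} with datum $\mu-\nu$, then invoke the estimates of \Cref{Stampacchia_measuredata}. Your extra care in justifying the identification via approximating sequences and the uniqueness part of \Cref{Stampacchia_measuredata} is a welcome elaboration of what the paper leaves implicit.
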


\begin{cor}
    Let $F,G\in\left(\elleom p\right)^m$ with $p\geq2$ and let $u,v$ be the corresponding solutions to \eqref{Problemabase}. Then there exists a constant $C=C(Q, \Omega,p, \alpha)$ such that 
    \begin{itemize}
        \item If $p>Q$, then $$\norma{u-v}{\elleom{\infty}}\leq C\norma{F-G}{\elleom p}$$ and $$\norma{u-v}{\SobomX2}\leq C\norma{F-G}{\elleom p};$$
        \item If $2\leq p <Q$, then $$\norma{u-v}{\elleom{p^{*}}}\leq C\norma{F-G}{\elleom p}$$ and $$\norma{u-v}{\SobomX2}\leq C\norma{F-G}{\elleom p}.$$
    \end{itemize}
\end{cor}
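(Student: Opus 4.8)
The plan is to reduce the statement to \Cref{Stampacchia_p*divergenza} together with one energy estimate, exploiting linearity exactly as in \Cref{Stability_Dati_fg}. Here $u,v$ denote the (unique) weak solutions of the problem in \Cref{Stampacchia_p*divergenza} with data $X^*F$ and $X^*G$ respectively; since $p\geq2$ and $\Omega$ has finite measure, $(\elleom p)^m\subset(\elleom2)^m$, so by \Cref{2starinbasso} the data lie in the dual of $\SobomX2$ and Lax--Milgram provides a unique weak solution in each case. Subtracting the two weak formulations, $w:=u-v\in\SobomX2$ is the weak solution of
\begin{equation*}
    \begin{dcases}
        X^*\left(A(x)Xw\right)=X^*(F-G)(x) \quad & \Omega\\
        w=0 \quad & \partial\Omega,
    \end{dcases}
\end{equation*}
with datum $F-G\in(\elleom p)^m$.

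First I would apply \Cref{Stampacchia_p*divergenza} directly to $w$. As noted there, its proof runs verbatim as the proofs of \Cref{Stampacchia-Linfty} and \Cref{Stampacchia-p**-grande}; in both, the constant produced by Stampacchia's iteration \Cref{LemmaStampacchiaLinfty} and the one produced by the Boccardo--Gallou\"et test-function argument are proportional to the $\elleom p$-norm of the datum. Hence, when $p>Q$ one obtains $\norma{w}{\elleom\infty}\leq C\norma{F-G}{\elleom p}$, and when $2\leq p<Q$ one obtains $\norma{w}{\elleom{p^*}}\leq C\norma{F-G}{\elleom p}$, with $C=C(Q,\Omega,p,\alpha)$; this is the first estimate in each of the two cases.

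For the remaining $\SobomX2$ estimate I would test the equation for $w$ with $w$ itself. Using the $X$-ellipticity of $A$ on the left and the Cauchy--Schwarz inequality on the right,
\begin{equation*}
    \alpha\io\abs{Xw}^2\leq\io A(x)Xw\cdot Xw=\io (F-G)\cdot Xw\leq\norma{F-G}{\elleom2}\norma{Xw}{\elleom2}.
\end{equation*}
Absorbing $\norma{Xw}{\elleom2}$ and using H\"older's inequality together with $\abs\Omega<\infty$ to bound $\norma{F-G}{\elleom2}\leq\abs\Omega^{\frac12-\frac1p}\norma{F-G}{\elleom p}$, we get $\norma{w}{\SobomX2}=\norma{Xw}{\elleom2}\leq C\norma{F-G}{\elleom p}$, which completes both cases.

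There is no genuine obstacle here: the only point requiring a word of care is the observation---already used implicitly throughout Section \ref{Sect_LpRegularity}---that the a priori bounds underlying \Cref{Stampacchia_p*divergenza} are linear in the datum, so that they transfer to the difference $w=u-v$. Everything else is an immediate consequence of linearity and the one-line energy computation above.
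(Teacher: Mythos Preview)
Your proof is correct and follows essentially the same approach as the paper: use linearity to pass to the equation for $w=u-v$ with datum $F-G$, then invoke \Cref{Stampacchia_p*divergenza} for the $L^\infty$/$L^{p^*}$ bounds and the energy estimate (what the paper calls ``Lax--Milgram's Theorem'') for the $\SobomX2$ bound. You have simply spelled out the energy computation and the linearity of the constants in \Cref{Stampacchia_p*divergenza} a bit more explicitly than the paper does.
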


\begin{proof}
    For each of the previous corollaries, it suffices to note that $u-v$ satisfies (here we focus on Corollary \ref{Stability_Dati_fg})
    \begin{equation*}
        \begin{dcases}
            X^*\left(A(x) X (u-v)\right) = f-g \quad & \Omega\\
            u-v=0 \quad & \partial\Omega.
        \end{dcases}
    \end{equation*}
    Then, the thesis follows from the estimates given by Theorems \ref{Stampacchia-Linfty}, \ref{Stampacchia-p**-grande}, \ref{Stampacchia-p**-piccolo}, \ref{March_estimates}, \ref{BG_gradiente}, \ref{Stampacchia_p*divergenza}, \ref{Stampacchia_measuredata} and Lax Milgram's Theorem (depending on the type of data $f$ and $g$).
\end{proof}

\section{Comparison with the uniformly elliptic case and application to the Heisenberg Laplacian}\label{Sect_Heisenberg}

In general, the constant $Q$ can be expected to be greater than the dimension $N$. For instance, in the context of a Carnot group, there is an explicit expression for $Q$ (see \cite[Proposition 11.15]{HP-SobPoinc}) that shows $Q = N$ if and only if $m = N$ and the vector fields $X_j$ are linearly independent at each point $x$. Therefore, although our results extend classical elliptic estimates to a more general setting, they do not constitute an improvement over the classical ones. Indeed, when $Q > N > 2$, we have
$$\frac{2Q}{Q+2}>\frac{2N}{N+2},$$
which implies that we have a higher summability requirement to get finite-energy solutions. Moreover,
$$\frac{Qp}{Q-2p}<\frac{Np}{N-2p}\quad\forall p\in\left(1,\frac{N}{2}\right),$$
which means that the improvement in summability is worse than the one we have in the uniformly elliptic case. Also note that, if $\frac{N}{2}<p<\frac{Q}{2}$, the solution to a uniformly elliptic equation with datum in $\elleom p$ would be bounded, while the solution to an $X$-elliptic equation would only belong to $\frac{Qp}{Q-2p}$.

We now apply the previous results to the Heisenberg Laplacian, which is the differential operator on $\R^N=\R^{2n+1}$ given by
$$
\Delta_H:=\sum_{i=1}^n\left(X_i\right)^2+\left(Y_i\right)^2,
$$
where
$$
\left\{\begin{aligned}
X_i & =\frac{\partial}{\partial x_i}+2 y_i \frac{\partial}{\partial t}, \\
Y_i & =\frac{\partial}{\partial y_i}-2 x_i \frac{\partial}{\partial t}, \\
T & =\frac{\partial}{\partial t} .
\end{aligned}\right.
$$
In the following, we will refer to the family of vector fields $\{X_1,\ldots,X_n,Y_1,\ldots,Y_n\}$ as $\mathcal X$. 
Note that, since the vector fields $X_i, Y_i$ and $T$ satisfy the H\"ormander condition at step $1$, the operator $-\Delta_H$ (despite not being elliptic) is $X$-elliptic with respect to $\mathcal X$ and (by \cite[Proposition 11.15]{HP-SobPoinc}) $Q=2n+2$.

The results we have proved in \Cref{Sect_LpRegularity} imply the following theorem.

\begin{theo}
    Let $n\geq1$, $\Omega\subset\R^{2n+1}$ be an open set of finite Lebesgue measure, $f\in\elleom p$ with $p\geq 1$ (or $f\in\mathcal{M}(\Omega)$) and let $u$ be the solution (by duality, or found by approximation) of 
    \begin{equation}
        \begin{dcases}
            -\Delta_H u = f(x)& \Omega\\
            u=0 \quad & \partial\Omega.
        \end{dcases}
    \end{equation}
    Then $u$ satisfies the following properties:
    \begin{itemize}
        \item if $p=1$, or $f\in\mathcal{M}(\Omega)$, then $u\in\Mar{\frac{n+1}{n}}$ and $\mathcal X u\in\Mar{\frac{2n+2}{2n+1}}$;
        \item if $p\in(1,\frac{2n+2}{n+2})$, then $u\in\elleom{\frac{(n+1)p}{n+1-p}}$ and $\mathcal X u\in\elleom{\frac{(2n+2)p}{2n+2-p}}$;
        \item if $p\in[\frac{2n+2}{n+2}, n+1)$, then $u\in\elleom{\frac{(n+1)p}{n+1-p}}$ and $\mathcal X u\in\elleom{2}$;
        \item if $p\in(n+1, \infty]$, or $f\in\elleom{\frac{N}{2},1}$, then $u\in\elleom{\infty}$ and $\mathcal X u\in\elleom{2}$.
    \end{itemize}
\end{theo}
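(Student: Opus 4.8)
The plan is to specialize each of the general regularity theorems from Section \ref{Sect_LpRegularity}, together with the measure-data result of Section \ref{Sect_Measuredata}, to the concrete case where the family of vector fields is $\mathcal X = \{X_1,\ldots,X_n,Y_1,\ldots,Y_n\}$, the matrix $A$ is the identity (so that $L = -\Delta_H$ is $X$-elliptic with $\alpha = \beta = 1$), and $Q = 2n+2$. There is no new analytic content: the theorem is purely a bookkeeping computation of the abstract exponents $p^\ast$, $p_\ast$, $p^{\ast\ast}$, $1^\ast$, $1^{\ast\ast}$ and the Marcinkiewicz indices $\tfrac{Q}{Q-2}$, $\tfrac{Q}{Q-1}$ once $Q$ is replaced by $2n+2$.

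First I would record the arithmetic. With $Q = 2n+2$ one has $\tfrac{Q}{Q-2} = \tfrac{2n+2}{2n} = \tfrac{n+1}{n}$ and $\tfrac{Q}{Q-1} = \tfrac{2n+2}{2n+1}$, which gives the $p=1$ (and measure-datum) line via \Cref{March_estimates} and \Cref{Stampacchia_measuredata}. Next, $2_\ast = \tfrac{2Q}{Q+2} = \tfrac{4n+4}{2n+4} = \tfrac{2n+2}{n+2}$ and $\tfrac{Q}{2} = n+1$, so the threshold exponents separating the three remaining regimes are exactly $\tfrac{2n+2}{n+2}$ and $n+1$. For the summability of $u$ in the intermediate ranges, both \Cref{Stampacchia-p**-grande} and \Cref{Stampacchia-p**-piccolo} give $u \in \elleom{p^{\ast\ast}}$ with $p^{\ast\ast} = \tfrac{pQ}{Q-2p} = \tfrac{(2n+2)p}{2n+2-2p} = \tfrac{(n+1)p}{n+1-p}$, which is the stated exponent for $p \in (1, n+1)$; for $p > n+1$ (or $f \in \elleom{Q/2,1} = \elleom{n+1,1}$), \Cref{Stampacchia-Linfty} and its duality improvement give $u \in \elleom\infty$. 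For the summability of the $\mathcal X$-gradient: when $p \geq 2_\ast = \tfrac{2n+2}{n+2}$ the datum lies in the dual of $\SobomX2$, so the weak solution has finite energy and $\mathcal Xu \in \elleom2$ (Lax--Milgram, Remark \ref{2starinbasso}); when $1 < p < 2_\ast$, \Cref{BG_gradiente} gives $\mathcal Xu \in \elleom{p^\ast}$ with $p^\ast = \tfrac{pQ}{Q-p} = \tfrac{(2n+2)p}{2n+2-p}$, and \Cref{March_estimates}/\Cref{Stampacchia_measuredata} handle $p=1$ and the measure case with $\mathcal Xu \in \Mar{Q/(Q-1)} = \Mar{(2n+2)/(2n+1)}$.

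The remaining point is to justify that all these theorems apply to $-\Delta_H$, i.e. that the structural hypotheses of Section \ref{Sect_Geometric assumptions} hold for $\mathcal X$: the vector fields are smooth and satisfy the H\"ormander condition at step $1$ (the single bracket $[X_i,Y_i] = -4T$ recovers the missing direction), so $d_{\mathcal X}$ is equivalent to the Euclidean metric, the doubling property holds, and the $\mathcal X$-Poincar\'e inequality holds; moreover $Q = 2n+2$ by \cite[Proposition 11.15]{HP-SobPoinc}. Once this is in place the four bullet points follow by simply reading off the general theorems at $Q = 2n+2$, $\alpha=1$. I do not anticipate a genuine obstacle: the only thing to be careful about is checking that the exponent thresholds line up at the endpoints (in particular that $p^{\ast\ast}$ computed from $Q=2n+2$ really does equal $\tfrac{(n+1)p}{n+1-p}$, and that $p = \tfrac{2n+2}{n+2}$ is precisely the value at which $p^{\ast\ast}$ meets $2^\ast$ and simultaneously the value at which energy solutions first appear), which is routine algebra and is guaranteed abstractly by the identities $2_\ast{}^{\ast\ast} = 2^\ast$ and $(\tfrac{Q}{2})_{\text{is the } L^\infty \text{ threshold}}$ already noted in the remarks following Theorems \ref{Stampacchia-Linfty} and \ref{Stampacchia-p**-grande}. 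Thus the proof is a one-line invocation: apply Theorems \ref{March_estimates}, \ref{Stampacchia-Linfty}, \ref{Stampacchia-p**-grande}, \ref{Stampacchia-p**-piccolo}, \ref{BG_gradiente} and \ref{Stampacchia_measuredata} with $X = \mathcal X$, $A = \mathrm{Id}$, $Q = 2n+2$.
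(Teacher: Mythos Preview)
Your proposal is correct and matches the paper's approach exactly: the paper does not give a proof of this theorem at all, stating only that ``The results we have proved in \Cref{Sect_LpRegularity} imply the following theorem,'' so the content is precisely the specialization you describe---invoking Theorems \ref{March_estimates}, \ref{Stampacchia-Linfty}, \ref{Stampacchia-p**-grande}, \ref{Stampacchia-p**-piccolo}, \ref{BG_gradiente} and \ref{Stampacchia_measuredata} with $Q=2n+2$ and reading off the exponents. Your explicit arithmetic checks (e.g.\ $\tfrac{Q}{Q-2}=\tfrac{n+1}{n}$, $2_*=\tfrac{2n+2}{n+2}$, $p^{**}=\tfrac{(n+1)p}{n+1-p}$) are more than the paper itself supplies.
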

\begin{remark}
    Note that, for any $n\geq1$, we have
    $$1<\frac{2n+2}{n+2}<n+1.$$
    This is equivalent to the fact that the homogeneous dimension $Q=2n+2$ is strictly greater than $2$.
\end{remark}

\begin{remark}
    The previous result only utilizes the fact that the Heisenberg Laplacian is an $X$-elliptic operator, without taking full advantage of its specific properties. This implies that, similar to what can be achieved with the (classical) Laplacian, there may be potential for improving this result. Specifically, the summability of the term $\mathcal{X} u$ might be better than what we have indicated, at least for data belonging to $\elleom p$ with $p>\frac{2n+2}{n+2}$.
\end{remark}

\begin{remark}
    The estimates we have for measure data are sharp. For example, consider the homogeneous norm on the Heisenberg group (see \cite{Bramanti_book,LeDonne-book} for an introduction to Carnot groups)
     $$\norma{(x,y,t)}{H}=\sqrt[4]{\left(|x|^2+|y|^2\right)^2+t^2}$$
    and $\Omega=\{\norma{(x,y,t)}{H}\leq1\}$. Then, the solution to the problem 
    \begin{equation}
        \begin{dcases}
            -\Delta_H u = \delta_0& \Omega\\
            u=0 \quad & \partial\Omega
        \end{dcases}
    \end{equation}
    is given by
    $$u(x,t,y)=\Gamma_H(x,y,t)-1.$$
    Here    
    \begin{equation}
        \Gamma_H(x,y,t)= C_n\norma{(x,y,t)}{H}^{2-Q}=C_n\norma{(x,y,t)}{H}^{-2n}
    \end{equation}
    is the Green function for the Heisenberg Laplacian (whose explicit form was described by Folland in \cite{Folland_Green}).

    Note that $u\in\Mar{\frac{Q}{Q-2}}$. Indeed, we have
    $$\Gamma_H(x,y,t)\geq \lambda \iff \norma{(x,y,t)}{H}\leq C_n\lambda^{\frac{1}{2-Q}},$$
    which implies, $$
    \abs{\{\Gamma_H\geq \lambda\}}=\abs{\{\norma{(x,y,t)}{H}\leq C_n\lambda^{\frac{1}{2-Q}}\}}= \frac{\tilde C}{\lambda^{\frac{Q}{2-Q}}} $$
    for some $\tilde C=\tilde C(n)>0$.
    On the other hand, $u$ does not belong to $\elleom{\frac{Q}{Q-2}}$: indeed, we have
    \begin{equation}
        \begin{split}
            \io \Gamma_H(x,y,t)^{\frac{Q}{Q-2}}
    &=
    \int_0^\infty \abs{\{\Gamma_H^{\frac{Q}{Q-2}}\geq \lambda\}\cap\Omega}\rm d \lambda \\&
    = \int_1^\infty \abs{\bigg\{ (x,y,t)\in\Omega\,\mid\,\norma{(x,y,t)}{H}\leq C_n\lambda^\frac{-1}{Q}\bigg\}}\rm d \lambda
    \\&=
    \tilde C \int_1^\infty \lambda^{-1}\rm d \lambda=+\infty.
        \end{split}
    \end{equation}
    Note that similar computations can be performed whenever a description of the Green function on a Carnot group are available.
\end{remark}

\section*{Acknowledgements}
I wish to thank Luigi Orsina for his valuable feedback and his help in revising this manuscript.
I am also grateful to Marco Bramanti and Ermanno Lanconelli for kindly answering my questions regarding the existing literature on $X$-elliptic operators.\\
I declare no conflicts of interest. This research did not receive any specific grant from funding agencies in the public, commercial, or not-for-profit sectors.\\
I am a member of the GNAMPA group of INdAM.

\newpage

\end{document}